\theoremstyle{plain}
\newtheorem{prop}{Proposition}
\newtheorem{thm}[prop]{Theorem}
\newtheorem*{thmA}{Theorem A}
\newtheorem*{thmB}{Theorem B}
\newtheorem*{thmC}{Theorem C}
\newtheorem*{thmD}{Corollary D}
\newtheorem{cor}[prop]{Corollary}
\newtheorem{Lemma}[prop]{Lemma}
\theoremstyle{definition}
\newtheorem*{defi}{Definition}
\theoremstyle{remark}
\newtheorem{rem}[prop]{Remark}
\newtheorem{example}{Example}
\numberwithin{prop}{section}
\numberwithin{example}{section} 
\numberwithin{equation}{section}
\newcommand{\Z}{\mathbb{Z}}
\newcommand{\Q}{\mathbb{Q}}
\newcommand{\N}{\mathbb{N}}
\newcommand{\F}{\mathbb{F}}
\newcommand{\de}{\mathrm{def}}
\newcommand{\rk}{\mathrm{rk}}
\newcommand{\ab}{\mathrm{ab}}
\newcommand{\HNN}{\mathrm{HNN}}
\newcommand{\het}{\mathrm{ht}}
\newcommand{\caT}{\mathcal{T}}
\newcommand{\gG}{\mathcal{G}}
\newcommand{\image}{\mathrm{im}}
\newcommand{\argu}{\hbox to 1.5ex{\hrulefill}}  
\begin{document}
	\title{	homological approximations  for   Profinite   and Pro-$p$ limit groups }
\author{Jhoel S. Gutierrez}
\address{ 
		(Jhoel S. Gutierrez) Universidade Federal de Mato Grosso do Sul, Departamento de Matematica, Campus Nova Andradina,  Av. Reitor Per\'o, Nova Andradina - MS, 79750-000,  Brazil }
	\email{jhoel.sandoval@ufms.br}
	\thanks{The author was supported by CNPq-Brazil}
	\date{\today}
	
	\begin{abstract} 
	 We study homological approximations of the profinite completion of a limit group (see Thm.~A) and  obtain     the analogous of Bridson and  Howie's Theorem for the profinite completion of a  non-abelian limit group  (see Thm.~B).  
	\end{abstract}
	\maketitle 
	\section{Introduction}
	\label{s:intro}
		 Recently, M. Bridson and D. Kochloukova  (see \cite{Brid}) studied asymptotic behavior of the dimensions of the
	homology groups of normal subgroups of finite index   of  a limit group  that allowed them to calculate  its analytic Betti numbers.

	In the present paper we study asymptotic behavior of the dimensions of the
	homology groups of open normal subgroups of the  profinite completion  of  a limit group. The profinite completion of a limit group belongs to the class of pro-$\mathcal{C}$  groups $\mathcal{Z(C)}$ studied by P.~Zalesskii and T.~Zapata   (see Section 3 in \cite{TZ} for more details).  In our first result  we generalize  Theorem F(c) (see \cite{TZ}) for the profinite completion of limit groups. 
	
	\begin{thmA}\label{thm:A}
		Let  $G$ be a limit group,  $p$ a prime number, and   $\{U_{i}\}_{i\geq 1}$  a sequence of open  normal subgroups of $\widehat{G}$  such
		that  $U_{i+1}\leq U_i$ for all $i\geq 1$ and $cd_p(\displaystyle\bigcap_{i\geq 1}{U_i})\leq 2$. Then
		\begin{itemize}
			\item[(1)]
			$\displaystyle\lim_{\overrightarrow{i}}{\dim_{\F_p}{ H_j(U_i,\F_p)}/[\widehat{G}:U_i]}=0$ for $j\geq 3$;
			\item[(2)]If $\{[\widehat{G}:U_i]\}_{i\geq 1}$ tends to infinity we have
			$$\lim_{\overrightarrow{i}}{\dim_{\F_p}{\big(H_1(U_i,\F_p)-H_2(U_i,\F_p)\big)}/[G:U_i]}=-\chi_p(\widehat{G});$$
			\item[(3)] If   $\displaystyle\bigcap_{i\geq 1}{U_i}=1$ we have 
			$\displaystyle\lim_{\overrightarrow{i}}\dim_{\F_p}{{H_2(U_i,\F_p)}/[\widehat{G}:U_i]}=0$\,\, \\ and\,\,  $\displaystyle\lim_{\overrightarrow{i}}{\dim_{\F_p}{H_1(U_i,\F_p)}/[\widehat{G}:U_i]}=-\chi_p(\widehat{G})\geq 0$.  In particular the rank gradient  of $\widehat{G}$ is non-negative.
		\end{itemize}
	\end{thmA}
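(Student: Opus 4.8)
The plan is to reduce the computation, via a Mayer--Vietoris estimate, to the vertex and edge groups of a fixed graph-of-groups decomposition of $\widehat G$, in the spirit of the proof of Theorem F(c) of \cite{TZ}. First I would record the structural input: for a limit group $G$ the completion $\widehat G$ is good in Serre's sense, topologically finitely generated, of type $FP_\infty$ over $\F_p$, and of finite $p$-cohomological dimension; by \cite{TZ} it is the profinite fundamental group of a finite graph of profinite groups $(\gG,Y)$ in which each vertex group $\gG_V$ is a free profinite group or of the form $\widehat{\Z}^{n}$, each edge group $\gG_E$ is of the form $\widehat{\Z}^{m}$, and each abelian edge group is a topological direct summand of the abelian vertex groups adjacent to it. Hence $\widehat G$ admits a finite free resolution of $\F_p$ over $\F_p[[\widehat G]]$, so $\chi_p(\widehat G)=\sum_j(-1)^j\dim_{\F_p}H_j(\widehat G,\F_p)$ is well defined, multiplicative under passage to open subgroups, and (by goodness) equal to the ordinary Euler characteristic $\chi(G)$, which is non-positive for limit groups.

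For part~(1) I would first note that the indices $[\widehat G:U_i]$ are non-decreasing, so either the chain stabilises --- and then $\bigcap_i U_i$ has finite index and $cd_p\le 2$ already gives $H_j(U_i,\F_p)=0$ for $j\ge 3$ and all large $i$ --- or $[\widehat G:U_i]\to\infty$, which I assume from now on. Each open $U_i$ acts on the profinite tree of $(\gG,Y)$ with finite quotient graph, realizing $U_i$ as the fundamental group of a finite graph of profinite groups whose vertex groups $U_{i,v}$ (resp.\ edge groups $U_{i,e}$) are open subgroups of conjugates of the $\gG_V$ (resp.\ $\gG_E$); since $U_i$ is normal in $\widehat G$, all vertices lying over a fixed vertex $V$ of $Y$ have the same index $d_{i,V}:=[\gG_V:U_i\cap\gG_V]$ in $\gG_V$ and there are $[\widehat G:U_i]/d_{i,V}$ of them (similarly for edges). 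The standard Mayer--Vietoris sequence of the graph of profinite groups gives
$$\dim_{\F_p}H_j(U_i,\F_p)\ \le\ \sum_v\dim_{\F_p}H_j(U_{i,v},\F_p)+\sum_e\dim_{\F_p}H_{j-1}(U_{i,e},\F_p).$$
Now $\dim_{\F_p}H_j(W,\F_p)=0$ whenever $j>cd_p(\gG_V)$ (it equals $0$ for free $W$ and $\binom{n}{j}$ for $W\cong\widehat{\Z}^{n}$), while if $cd_p(\gG_V)\ge 3$ the hypothesis $cd_p(\bigcap_i U_i)\le 2$ forces $d_{i,V}\to\infty$ (otherwise $\bigcap_i U_i$ would contain a finite-index subgroup of some $\widehat{\Z}^{n}$ with $n\ge 3$); the same applies to edges. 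Dividing the displayed inequality by $[\widehat G:U_i]$ and grouping the vertices and edges of the quotient graph according to their images in $Y$, each summand becomes $\dim_{\F_p}H_j(W_{i,V},\F_p)/d_{i,V}$ for an index-$d_{i,V}$ subgroup $W_{i,V}\le\gG_V$ (and likewise for edges with $H_{j-1}$), which for $j\ge 3$ is either identically $0$ or $O(1/d_{i,V})\to 0$. The only borderline term is $j=3$ with an edge group of rank $2$, where $H_2(U_{i,e},\F_p)\cong\F_p$ and $d_{i,E}$ need not grow; there $U_{i,e}$ is a topological direct summand of an adjacent abelian vertex group $U_{i,v}$, so $H_2(U_{i,e},\F_p)\to H_2(U_{i,v},\F_p)$ is split injective and contributes nothing to the kernel term of Mayer--Vietoris. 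This gives $\dim_{\F_p}H_j(U_i,\F_p)/[\widehat G:U_i]\to 0$ for $j\ge 3$.

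Part~(2) then follows by a counting argument: multiplicativity of $\chi_p$ gives $\sum_j(-1)^j\dim_{\F_p}H_j(U_i,\F_p)=[\widehat G:U_i]\,\chi_p(\widehat G)$, and solving for $\dim H_1-\dim H_2$, dividing by $[\widehat G:U_i]$, and using part~(1) together with $\dim_{\F_p}H_0(U_i,\F_p)=1$ and $[\widehat G:U_i]\to\infty$ yields the limit $-\chi_p(\widehat G)$. For part~(3), when $\bigcap_i U_i=1$ the Mayer--Vietoris estimate of part~(1) applies with $cd_p(\bigcap_i U_i)=0$, so now $d_{i,V}\to\infty$ for \emph{every} infinite vertex and edge of $Y$; running the same bound one degree lower (where edge groups, being abelian, contribute bounded $H_1$) gives $\dim_{\F_p}H_2(U_i,\F_p)/[\widehat G:U_i]\to 0$, and then part~(2) forces $\dim_{\F_p}H_1(U_i,\F_p)/[\widehat G:U_i]\to-\chi_p(\widehat G)=-\chi(G)\ge 0$; since $\dim_{\F_p}H_1(U_i,\F_p)$ bounds the minimal number of topological generators of $U_i$ from below, the rank gradient of $\widehat G$ along $(U_i)$ is non-negative.

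The step I expect to be the main obstacle is the Mayer--Vietoris bookkeeping in part~(1), and specifically the control of the connecting homomorphisms attached to rank-$2$ abelian edge groups in degree $3$: this is exactly where the fine structure of $\widehat G$ (the profinite counterpart of the JSJ/ICE decomposition of $G$) and the normality of each $U_i$ in $\widehat G$ are used in an essential way, and it is the technical heart of the extension of Theorem F(c) of \cite{TZ} to profinite completions of limit groups.
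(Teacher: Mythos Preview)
Your overall strategy---Mayer--Vietoris control over a graph-of-groups decomposition, then an Euler-characteristic count for part~(2), then a degree-lowering rerun for part~(3)---is exactly the paper's route. The gap is in the structural input you take as given at the start. You assert that $\widehat G$ decomposes in a single step as the profinite fundamental group of a finite graph of profinite groups whose vertex groups are all either free profinite or $\widehat{\Z}^{n}$ and whose edge groups are $\widehat{\Z}^{m}$. Neither \cite{TZ} nor any standard result on limit groups furnishes such a one-shot decomposition: what Proposition~\ref{prop:prin} gives is a finite graph of groups with \emph{cyclic or trivial} edge groups and vertex groups that are either finitely generated abelian or limit groups of \emph{strictly smaller height}---not, in general, free. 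The height-$0$ (free) stage is reached only after iterating, and this is why the paper proves part~(1) by \emph{induction on the height} of $G$: at each step the edge groups satisfy $cd_p(\widehat G_e)\le 1$, so $H_{j-1}(U_i\cap\widehat G_e,\F_p)=0$ for $j\ge 3$ automatically; abelian vertex groups are handled by exactly your dichotomy ``$d_{i,V}\to\infty$, or else the chain stabilises and then $cd_p\le 2$ kills $H_j$ for $j\ge 3$''; and non-abelian vertex groups are dispatched by the induction hypothesis. Lemma~\ref{l20} then packages the Mayer--Vietoris estimate.

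In particular, your ``borderline'' case---a rank-$2$ abelian edge group contributing a nonzero $H_2$ at $j=3$, patched by a split-injectivity argument---does not occur: in the decomposition actually available the edge groups are procyclic, hence $H_2(U_i\cap\widehat G_e,\F_p)=0$. Your parts~(2) and~(3) are fine as written once part~(1) is in place, except that in part~(3) the non-abelian vertex groups again require the induction hypothesis on height to force $\dim_{\F_p}H_2(U_i\cap\widehat G_v,\F_p)/[\widehat G_v:U_i\cap\widehat G_v]\to 0$; the paper's argument is otherwise identical to yours.
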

	
	Note that a finitely generated  non-trivial normal  subgroup of a free group as well as of its profinite completion is  of finite index (see \cite[\S 8.6, Thm.~8.6.5]{ZR}). 
	Celebrated   Bridson and  Howie's Theorem (see \cite[ Thm~3.1]{Bri} ) states that a finitely generated non-trivial normal subgroup   of a limit group is of finite index. Using methods of M.~Shusterman (see \cite{shu2}) we prove the following analog of   Bridson and  Howie's Theorem.
	
	\begin{thmB}
		Let $G$ be a  non-abelian limit group, and  let $N\neq 1$ be a finitely generated  normal closed subgroup  of  $\widehat{G}$.  Then $[\widehat{G}:N]<\infty$. 
	\end{thmB}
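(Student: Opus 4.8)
The plan is to argue by contradiction: suppose $N\neq 1$ is a finitely generated closed normal subgroup of $\widehat G$ with $[\widehat G:N]=\infty$, so that $Q:=\widehat G/N$ is an infinite profinite group, and derive a contradiction by transporting Bridson--Howie's tree argument to the profinite world. First I would dispose of the case in which $G$ is free: being non-abelian, $G$ is then free of rank $\geq 2$, so $\widehat G$ is a non-abelian free profinite group and the assertion is exactly \cite[\S8.6, Thm.~8.6.5]{ZR}. In general $G$ is assembled from free and free-abelian groups by a sequence of amalgams and HNN extensions over finitely generated abelian subgroups --- its iterated abelian JSJ / limit-group hierarchy --- and since profinite completion of a limit group is compatible with these graph-of-groups decompositions (\cite{TZ}, together with the profinite-tree machinery of \cite{ZR}), $\widehat G$ acts minimally on a profinite tree $\widehat T$ whose edge stabilizers are finitely generated proabelian and whose vertex stabilizers are profinite completions of limit groups of strictly smaller complexity. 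The argument then proceeds by induction on this complexity, with the non-abelian free profinite groups as base case.

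The heart of the matter is the behaviour of $N$ on $\widehat T$. Since $N\trianglelefteq\widehat G$, its fixed subtree $\widehat T^{\,N}$ is $\widehat G$-invariant, hence --- by minimality of the action together with its faithfulness (a non-abelian limit group, and hence $\widehat G$, has no nontrivial closed normal abelian subgroup) --- either $\widehat T^{\,N}=\widehat T$, in which case $N=1$, contradicting $N\neq 1$, or $\widehat T^{\,N}=\emptyset$. In the second case $N$ acts without global fixed point; let $D$ be the minimal $N$-invariant subtree. Finite generation of $N$ forces $D/N$ to be finite, so $N$ is the profinite fundamental group of a finite graph of profinite groups with (sub)procyclic edge groups and with vertex groups $N_{w}:=N\cap\widehat G_{w}$, and a conjugation argument using normality of $N$ in $\widehat G$ shows that each $N_{w}$ is normal in $\widehat G_{w}$. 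This is where Shusterman's techniques enter: a profinite Howson/accessibility argument for groups in this class (cf.\ \cite{shu2}) is used to guarantee that each $N_{w}$ is again finitely generated, so that --- unless $N_{w}$ is trivial, or $\widehat G_{w}$ is one of the abelian vertex groups, which is handled by a separate analysis of how $N_{w}$ meets the incident edge groups --- the inductive hypothesis applies inside $\widehat G_{w}$ and gives that $N_{w}$ is open in $\widehat G_{w}$.

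Finally one propagates the conclusion upward. If every $N_{w}$ is trivial, then $N$ acts freely on $D$ and is therefore a finite-rank free profinite group; since $N$ is normal and $\chi_{p}(\widehat G)=\chi(G)<0$ for a non-abelian limit group (cf.\ \cite{Brid}), an Euler-characteristic/rank-gradient estimate along a descending chain of preimages in $\widehat G$ of open subgroups of $Q$ --- the setting of Theorem~A once one has reduced to $cd_{p}(N)\leq 2$, and a direct count otherwise --- is incompatible with $[\widehat G:N]=\infty$, exactly as for free $G$. If instead some $N_{w}$ is open in $\widehat G_{w}$, then $N$ contains a finite-index subgroup of a vertex group of $\widehat T$, and the graph-of-groups bookkeeping together with normality of $N$ forces $N$ to be open in $\widehat G$. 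I expect the two genuine difficulties to be: first, the \emph{rigid} vertex groups --- a freely indecomposable non-abelian limit group with no essential abelian splitting is one-ended, contains no $\mathbb{Z}^{2}$, hence is word-hyperbolic with finite outer automorphism group, and one must transfer to its profinite completion the discrete fact (a consequence of residual freeness together with the structure of finitely generated normal subgroups of hyperbolic groups) that it has no finitely generated normal subgroup of infinite index; and second, making the induction on complexity well-founded while controlling, in the profinite category, the minimal invariant subtree $D$, the finiteness of $D/N$, and above all the finite generation of the vertex groups $N_{w}$ --- which is precisely what Shusterman's methods from \cite{shu2} are designed to deliver.
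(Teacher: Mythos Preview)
Your approach is genuinely different from the paper's, and the paper's route is far shorter. The paper does \emph{not} transplant the Bridson--Howie tree argument; it gives a purely homological proof. Concretely: by Theorem~A(3), any descending chain $\{V_i\}$ of open normal subgroups of $\widehat G$ with trivial intersection satisfies
\[
\lim_{i\to\infty}\frac{\dim_{\F_p}H_1(V_i,\F_p)}{[\widehat G:V_i]}=-\chi_p(\widehat G)=-\chi(G)>0,
\]
the strict inequality because $G$ is non-abelian. On the other hand, since $\widehat G$ and $N$ are finitely generated, Schreier-type bounds give $\dim_{\F_p}H_1(NV_i,\F_p)\le d(NV_i)\le d(\widehat G)\,[\widehat G:NV_i]$ and $\dim_{\F_p}H_1(N\cap V_i,\F_p)\le d(N\cap V_i)\le d(N)\,[N:N\cap V_i]$. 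These two linear bounds are precisely the hypotheses of Proposition~\ref{p:4}, whose proof is a five-term-exact-sequence count: if $[\widehat G:N]=\infty$ then for each $\epsilon>0$ one can choose $i$ with both $[\widehat G:NV_i]$ and $[NV_i:V_i]$ larger than $(k_1+k_2)/\epsilon$, and the sequence
\[
H_1(N\cap V_i,\F_p)_{V_i/(N\cap V_i)}\to H_1(V_i,\F_p)\to H_1(V_i/(N\cap V_i),\F_p)\to 0
\]
then forces $\dim_{\F_p}H_1(V_i,\F_p)/[\widehat G:V_i]<\epsilon$, contradicting the positive limit. That is the whole argument --- no profinite trees, no induction on height, no vertex-group analysis. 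This \emph{is} what the paper means by ``methods of Shusterman''.

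Your proposal, by contrast, is a programme with gaps you yourself identify as unresolved. Two of them are serious. First, the assertion that finite generation of $N$ forces the quotient $D/N$ of the minimal $N$-invariant subtree to be finite does not transfer automatically from discrete to profinite trees; the usual compactness-of-generators argument is unavailable. Second, and more fundamentally, the finite generation of the vertex groups $N_w=N\cap\widehat G_w$ is exactly the missing step, and your appeal to ``a profinite Howson/accessibility argument \dots\ (cf.\ \cite{shu2})'' is a misreading: Shusterman's input is the rank-gradient method the paper actually uses, not a Howson property, and no such Howson property for $\widehat G$ is available in the cited literature. Your rigid-vertex case is not an induction step at all --- rigid pieces do not split further --- and you offer no mechanism for it. The paper's homological argument simply bypasses all of these issues.
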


	Theorem A was proved by Kochloukova and Zalesskii (see \cite{AC}) for the pro-$p$ analogs of limit groups introduced by  D. Kochloukova and P.~Zalesskii (see \cite{AD}) using the construction of extension of centralizers.  
	The following result establishes   it over $\Q_p$.  	 
	\begin{thmC} 
		Let $G$ be a pro-$p$ limit group and let $\{U_{i}\}_{i \geq 1}$ be  a sequence of open subgroups of $G$ such that $U_{i+1}\leq U_i$  for all $i\geq 1$ and $cd_p(\displaystyle\bigcap_{i\geq 1}{U_i})\leq 2$. Then
		\begin{itemize}
			\item[(1)]
			$\displaystyle\lim_{\overrightarrow{i}}{\dim_{\Q_p}{\big(\Q_p\otimes_{\Z_p} H_j(U_i,\Z_p)\big)}/[G:U_i]}=0$  for $j\geq 3$;
			\item[(2)] If $\{[G:U_i]\}_{i\in I}$ tends to infinity we have
			$$\lim_{\overrightarrow{i}}{\big(\dim_{\Q_p}{(\Q_p\otimes_{\Z_p}H_1(U,\Z_p))-\dim_{\Q_p} (\Q_p\otimes_{\Z_p}H_2(U,\Z_p))\big)}/[G:U_i]}=-\chi_p(G);$$ 
			\item[(3)]If  $\displaystyle\bigcap_{i\geq 1}{U_i}=1$ we have \\
			$$\displaystyle\lim_{\overrightarrow{i}}\dim_{\Q_p}{\big(\Q_p\otimes_{\Z_p}{H_2(U_i,\Z_p)\big)}/[G:U_i]}=0$$  and\\ $$\displaystyle\lim_{\overrightarrow{i}}{\dim_{\Q_p}{\big(\Q_p\otimes_{\Z_p}H_1(U_i,\Z_p)\big)}/[G:U_i]}=\displaystyle\lim_{\overrightarrow{i}}{\rk_{\Q_p}(U_i)}/[G:U_i]=-\chi_p(G)$$.
		\end{itemize}
	\end{thmC}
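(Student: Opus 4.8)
The plan is to derive Theorem~C from its mod-$p$ counterpart, namely the pro-$p$ version of Theorem~A proved by Kochloukova and Zalesskii in \cite{AC}, by transporting the $\F_p$-statements to rational continuous homology through the Bockstein (universal coefficient) exact sequence. The guiding observation is that over $\Q_p$ the torsion in the continuous homology modules $H_j(U_i,\Z_p)$ disappears, whereas over $\F_p$ it can only increase dimensions; hence the $\Q_p$-Betti numbers are trapped between $0$ and the corresponding $\F_p$-Betti numbers, and the torsion discrepancies will be bounded by higher mod-$p$ Betti numbers which \cite{AC} already shows to be $o([G:U_i])$.

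First I would recall from \cite{AD} that a pro-$p$ limit group $G$ --- and therefore every open subgroup $U\leq G$ --- is of type $FP_\infty$ over $\Z_p$ and has finite $p$-cohomological dimension, so each $H_j(U,\Z_p)$ is a finitely generated $\Z_p$-module; write $H_j(U,\Z_p)\cong\Z_p^{\,b_j(U)}\oplus T_j(U)$ with $T_j(U)$ finite, and let $d(A)$ be the minimal number of generators of a finitely generated $\Z_p$-module $A$. Then $b_j(U)=\dim_{\Q_p}\bigl(\Q_p\otimes_{\Z_p}H_j(U,\Z_p)\bigr)$, and $b_1(U)=\rk_{\Q_p}(U)$ since $H_1(U,\Z_p)$ is the abelianization of $U$ in the category of pro-$p$ groups. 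Applying continuous $\Z_p$-homology of $U$ to $0\to\Z_p\xrightarrow{\,p\,}\Z_p\to\F_p\to 0$ yields short exact sequences
$$0\longrightarrow H_j(U,\Z_p)/p\longrightarrow H_j(U,\F_p)\longrightarrow H_{j-1}(U,\Z_p)[p]\longrightarrow 0 ,$$
from which $\dim_{\F_p}H_j(U,\F_p)=b_j(U)+d(T_j(U))+d(T_{j-1}(U))$, with $T_0(U)=0$. In particular $b_j(U)\leq\dim_{\F_p}H_j(U,\F_p)$ and $d(T_{j-1}(U))\leq\dim_{\F_p}H_j(U,\F_p)$ for all $j$.

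With this dictionary, (1) follows by squeezing: $0\leq b_j(U_i)\leq\dim_{\F_p}H_j(U_i,\F_p)$, so dividing by $[G:U_i]$ and using part~(1) of \cite{AC} kills the $\Q_p$-Betti numbers for $j\geq 3$. Similarly, when $\bigcap_i U_i=1$, part~(3) of \cite{AC} gives $\dim_{\F_p}H_2(U_i,\F_p)=o([G:U_i])$, hence $b_2(U_i)=o([G:U_i])$, which is the first limit in (3). For (2) I would combine the identity $\dim_{\F_p}H_1(U_i,\F_p)-\dim_{\F_p}H_2(U_i,\F_p)=b_1(U_i)-b_2(U_i)-d(T_2(U_i))$ with the bound $d(T_2(U_i))\leq\dim_{\F_p}H_3(U_i,\F_p)=o([G:U_i])$ coming from part~(1); together with part~(2) of \cite{AC} this gives $\bigl(b_1(U_i)-b_2(U_i)\bigr)/[G:U_i]\to-\chi_p(G)$. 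Finally, for the remaining equalities in (3), from $\dim_{\F_p}H_1(U_i,\F_p)=b_1(U_i)+d(T_1(U_i))$ and $d(T_1(U_i))\leq\dim_{\F_p}H_2(U_i,\F_p)=o([G:U_i])$ (just established), part~(3) of \cite{AC} gives $b_1(U_i)/[G:U_i]\to-\chi_p(G)$; since $b_1(U_i)=\rk_{\Q_p}(U_i)$ this is exactly the displayed chain of equalities. (Alternatively, the bookkeeping in (2) and (3) can be packaged through the multiplicativity $\chi_p(U_i)=[G:U_i]\,\chi_p(G)$, valid since $G$ is of type $FP$ over $\Z_p$, together with the coefficient-independence of $\chi_p$.)

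I expect the only genuinely non-formal ingredient to be \cite{AC}; everything above is elementary once the structure theory of \cite{AD} has been invoked. Within the argument, the step that needs the most care is the torsion bookkeeping: verifying that the Bockstein sequence is available for continuous $p$-homology of $FP_\infty$ pro-$p$ groups and, crucially, that each discrepancy $d(T_{j-1}(U_i))$ is dominated by a mod-$p$ Betti number that \cite{AC} has already shown to be sublinear, so that the $\Q_p$-limits agree with the corresponding $\F_p$-limits. One minor point to dispose of separately is that in case (3) the hypothesis $\bigcap_i U_i=1$ forces $[G:U_i]\to\infty$ whenever $G$ is infinite (a nested chain of open subgroups of bounded index must stabilise), which is what allows one to invoke part~(3) of \cite{AC} and also shows $-\chi_p(G)\geq 0$.
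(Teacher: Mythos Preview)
Your proposal is correct and follows essentially the same strategy as the paper: both reduce Theorem~C to the $\F_p$-result of Kochloukova--Zalesskii in \cite{AC} via the inequality $\dim_{\Q_p}\bigl(\Q_p\otimes_{\Z_p}H_j(U,\Z_p)\bigr)\leq\dim_{\F_p}H_j(U,\F_p)$ extracted from the long exact sequence associated to $0\to\Z_p\xrightarrow{p}\Z_p\to\F_p\to 0$ (this is exactly the paper's Lemma~\ref{l1}). Parts~(1) and the first limit in~(3) are handled identically.

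The only difference is in part~(2): the paper's main argument uses the coefficient-independence of the Euler characteristic, writing $\chi_p(G)=\sum_j(-1)^j\dim_{\Q_p}\bigl(\Q_p\otimes H_j(U_i,\Z_p)\bigr)/[G:U_i]$ directly and then invoking part~(1), whereas your primary route goes through the explicit torsion identity $\dim_{\F_p}H_1-\dim_{\F_p}H_2=b_1-b_2-d(T_2)$ together with the bound $d(T_2)\leq\dim_{\F_p}H_3$. You already flag the paper's approach in your parenthetical alternative, so there is no substantive divergence; your Bockstein bookkeeping simply yields a bit more (the exact formula $\dim_{\F_p}H_j=b_j+d(T_j)+d(T_{j-1})$) than the paper's Lemma~\ref{l1}, which records only the inequality.
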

	
	\begin{thmD}
		Let $G$ be a pro-$p$ limit group and let  $\{U_{i}\}_{i\geq 1}$ be a sequence of open subgroups of $G$ such that    $U_{i+1}\leq U_i$ and  $\displaystyle\bigcap_{i\geq 1}{U_i}=1$.  Then  $$\displaystyle\varinjlim_{i}{T(U_i)}/[G:U_i]=0,$$ where $T(U_i)$ is the rank of the torsion group of $U_i^{\ab}.$
	\end{thmD}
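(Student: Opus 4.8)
The plan is to rewrite the torsion rank $T(U_i)$ homologically and then play Theorem C off against its mod-$p$ counterpart. For a finitely generated pro-$p$ group $U$ the continuous homology group $H_1(U,\Z_p)$ is the pro-$p$ abelianization $U^{\ab}$, a finitely generated $\Z_p$-module, so $U^{\ab}\cong\Z_p^{\,r}\oplus A$ with $A$ a finite abelian $p$-group; by definition $T(U)$ is the rank of $A$, i.e. $T(U)=\dim_{\F_p}(A/pA)$, whence $\dim_{\F_p}\bigl(U^{\ab}\otimes_{\Z_p}\F_p\bigr)=r+T(U)$. On the other hand $r=\dim_{\Q_p}\bigl(\Q_p\otimes_{\Z_p}H_1(U,\Z_p)\bigr)=\rk_{\Q_p}(U)$. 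Since $H_0(U,\Z_p)=\Z_p$ is $\Z_p$-free, the universal coefficient sequence
$$0\longrightarrow H_1(U,\Z_p)\otimes_{\Z_p}\F_p\longrightarrow H_1(U,\F_p)\longrightarrow\Tor_1^{\Z_p}\bigl(H_0(U,\Z_p),\F_p\bigr)\longrightarrow 0$$
degenerates and gives $H_1(U,\F_p)\cong U^{\ab}\otimes_{\Z_p}\F_p$. Combining the three identities yields
$$T(U)=\dim_{\F_p}H_1(U,\F_p)-\rk_{\Q_p}(U),$$
valid for every finitely generated pro-$p$ group $U$.

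Next I apply this with $U=U_i$ and normalise by $[G:U_i]$. Because $\bigcap_{i}U_i=1$ we have $cd_p(\bigcap_i U_i)=0\le 2$, so the hypotheses of Theorem C are met; the mod-$p$ analogue of Theorem A for pro-$p$ limit groups is likewise available (Kochloukova--Zalesskii, \cite{AC}). Part~(3) of that mod-$p$ statement gives
$$\lim_{\overrightarrow{i}}\ \dim_{\F_p}H_1(U_i,\F_p)\big/[G:U_i]=-\chi_p(G),$$
while part~(3) of Theorem C gives
$$\lim_{\overrightarrow{i}}\ \rk_{\Q_p}(U_i)\big/[G:U_i]=-\chi_p(G).$$
Both limits exist and are equal, so subtracting the two displays and invoking the identity of the first paragraph,
$$\lim_{\overrightarrow{i}}\ T(U_i)\big/[G:U_i]=-\chi_p(G)-\bigl(-\chi_p(G)\bigr)=0,$$
which is the assertion (and is consistent with $T(U_i)/[G:U_i]\ge 0$).

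The only substantive point is the homological bookkeeping of the first paragraph: one must check that the invariant called $\rk_{\Q_p}$ in Theorem C is precisely the free $\Z_p$-rank of $U_i^{\ab}=H_1(U_i,\Z_p)$, so that it exactly cancels the torsion-free contribution to $\dim_{\F_p}H_1(U_i,\F_p)$, and that the $\chi_p(G)$ of Theorem C is the same pro-$p$ Euler characteristic $\sum_{j\ge 0}(-1)^j\dim_{\F_p}H_j(G,\F_p)$ that appears in \cite{AC} — a finite number, since a pro-$p$ limit group is of type $FP_\infty$ with $cd_p\le 2$. Once these identifications are in place the corollary is just Theorem C minus the mod-$p$ statement of \cite{AC}; no new limiting argument is needed.
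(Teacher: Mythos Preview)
Your proof is correct and follows essentially the same route as the paper: both express $T(U_i)=\dim_{\F_p}H_1(U_i,\F_p)-\rk_{\Q_p}(U_i)$ (the paper phrases this via $d(U_i)\geq d(U_i^{\ab})=\rk_{\Q_p}(U_i)+T(U_i)$ together with $d(U_i)=\dim_{\F_p}H_1(U_i,\F_p)$ for pro-$p$ groups) and then subtract the limit from \cite[Thm.~5.3(iii)]{AC} and the limit from Theorem~C(3), both equal to $-\chi_p(G)$. One small inaccuracy in a side remark: pro-$p$ limit groups need not have $cd_p\le 2$ (e.g.\ $\Z_p^m$ has $cd_p=m$); only finiteness of $cd_p$ is required and that does hold.
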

	\noindent
	{\bf Acknowledgement:}
	The author would like to thank P.~Zalesskii for a very useful comment concerning an
	earlier version of the paper. 
	
	\section{Preliminaries}
	This section contains certain preliminary results which will be of use later. 
	\subsection{Limit groups}

\subsubsection{Extension of centralizers}
	\label{ss:et}
	Starting from a limit group $G$ there is a standard procedure
	to construct a limit group $G(C,m)$, where $C\subseteq G$ is a maximal cyclic subgroup
	of $G$ and $m\in\N$. This procedure is known as {\em extension of centralizers}, i.e., if $G$ is a limit group, then
	\begin{equation}
	\label{eq:excen}
	G(C,m)=G\star_C(C\times\Z^m)
	\end{equation}
	is again a limit group (see \cite[Lemma 2 and Thm.~4]{KM2}).
	For short we call a limit group $G$ to be an {\em iterated extension of centralizers of a free group} (= i.e.c. free group),
	if there exists a sequence of limit groups $(G_k)_{0\leq k\leq n}$ such that
	\begin{itemize}
		\item[(E$_1$)] $G_0=F$ is a finitely generated free group, and $G_n\simeq G$;
		\item[(E$_2$)] for $k\in\{0,\ldots,n-1\}$ there exists a maximal cyclic subgroup $C_k\subseteq G_k$ and
		$m_k\in\N$ such that $G_{k+1}\simeq G_k(C_k,m_k)$.  
	\end{itemize}
	If $G$ is an i.e.c. free group, one calls the minimum number $n\in\N_0$ for which there exists
	a sequence of limit groups $(G_k)_{0\leq k\leq n}$ satisfying (E$_1$) and (E$_2$) the {\em level} of $G$.
	This number will be denoted it by $\ell(G)$. E.g., a finitely generated free group is an i.e.c. free group of level $0$, and a finitely generated free abelian group is an
	i.e.c. free group of level $1$.
	
	\subsubsection{The height of a limit group}
	\label{ss:hgt}
	By the second embedding theorem (see \cite[\S 2.3, Thm.~2]{KMRS}, \cite[Thm.~4]{KM2}), every limit group $G$ is isomorphic to a subgroup
	of an i.e.c. free group $H$. The {\em height} $\het(G)$ of $G$ is defined as
	\begin{equation}
	\label{eq:defht}
	\het(G)=\min\{\,\ell(H)\mid G\subseteq H,\ \text{$H$ an i.e.c. free group}\,\}.
	\end{equation}
	E.g., a limit group $G$ is of height $0$ if, and only if, it is a free group of finite rank,
	and non-cyclic finitely generated free abelian groups are of height $1$.
	
	\subsubsection{Limit groups as fundamental groups of graph of groups} The next proposition is well-known 
	(see for example \cite[Proposition~2.1]{jh})
	\begin{prop}
		\label{prop:prin}
		Let $G$ be a limit group of height $n\geq 1$. Then $G$ is isomorphic to the fundamental
		group $\pi_1(\gG^\prime,\Lambda_0,\caT_0)$ of a graph of groups $\gG^\prime$
		satisfying
		\begin{itemize}
			\item[(i)] $\Lambda_0$ is finite;
			\item[(ii)] for all $v\in V(\Lambda_0)$, $\gG^\prime_v$ is finitely generated abelian
			or a limit group of height at most $n-1$;
			\item[(iii)] for all $e\in E(\Lambda_0)$, $\gG^\prime_e$ is infinite cyclic or trivial.
		\end{itemize}
	\end{prop}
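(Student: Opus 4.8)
The plan is to obtain the splitting directly from the second embedding theorem together with Bass--Serre theory. Since $\het(G)=n\geq 1$, by \eqref{eq:defht} there is an i.e.c. free group $H$ with $G\subseteq H$ and $\ell(H)=n$. By (E$_1$)--(E$_2$) and the definition of the level, $H$ arises from a limit group $H'$ with $\ell(H')=n-1$ by a single extension of centralizers, say $H\simeq H'(C,m)=H'\star_C(C\times\Z^m)$ for some maximal cyclic $C\subseteq H'$ and $m\in\N$. This exhibits $H$ as the fundamental group of a finite graph of groups with a single edge, the two vertex groups being $H'$ and $C\times\Z^m$ and the edge group being $C\cong\Z$.

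Let $T$ be the Bass--Serre tree of this splitting of $H$. Restricting the action, $G$ acts on $T$ without inversions. Limit groups are finitely generated, so $G$ is finitely generated; hence there is a $G$-invariant subtree $T_0\subseteq T$ with $G\backslash T_0$ finite (a finitely generated group acting on a tree admits a cocompact invariant subtree). Setting $\Lambda_0:=G\backslash T_0$ and letting $\caT_0$ be a maximal subtree of $\Lambda_0$, the structure theorem of Bass--Serre theory yields $G\simeq\pi_1(\gG',\Lambda_0,\caT_0)$, where the vertex and edge groups of $\gG'$ are the $G$-stabilizers of representatives of the vertex and edge orbits of $T_0$. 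In particular $\Lambda_0$ is finite, which is (i).

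It remains to identify the vertex and edge groups. Each vertex stabilizer of the $H$-action is conjugate in $H$ to $H'$ or to $C\times\Z^m$, and each edge stabilizer is conjugate to $C$. Therefore every vertex group of $\gG'$ has the form $G\cap gH'g^{-1}$ or $G\cap g(C\times\Z^m)g^{-1}$, and every edge group has the form $G\cap gCg^{-1}$, for a suitable $g\in H$. In the first case $\gG'_v$ embeds into $H'$; since subgroups of limit groups are limit groups and $H'$ is an i.e.c. free group of level $n-1$, $\gG'_v$ is a limit group with $\het(\gG'_v)\leq\ell(H')=n-1$. In the second case $\gG'_v$ embeds into $C\times\Z^m\cong\Z^{m+1}$ and so is finitely generated abelian; this proves (ii). Finally, each edge group embeds into $gCg^{-1}\cong\Z$ and hence is infinite cyclic or trivial, proving (iii).

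The only step requiring external input is the reduction to a \emph{finite} graph $\Lambda_0$, i.e.\ the standard fact that a finitely generated group acting on a tree has a cocompact invariant subtree; everything else is just the permanence of the class of limit groups under passing to subgroups, together with the monotonicity of $\het$ that is built into its definition through i.e.c.\ free groups.
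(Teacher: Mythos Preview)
The paper does not actually prove this proposition; it merely cites it as well-known (see \cite[Proposition~2.1]{jh}). Your argument is the standard one and is essentially correct: embed $G$ into an i.e.c.\ free group $H$ of level $n$, use the one-edge splitting $H=H'\star_C(C\times\Z^m)$, restrict the Bass--Serre action to $G$, and read off the graph of groups from a cocompact invariant subtree. Two small points worth tightening: first, from the existence of a chain $(G_k)_{0\le k\le n}$ you only get $\ell(H')\le n-1$, not equality, but $\le$ is all you use; second, to conclude that the vertex groups $\gG'_v$ are \emph{finitely generated} (hence genuine limit groups and, in the abelian case, finitely generated abelian) you should invoke the standard fact that in a finite graph of groups with finitely generated fundamental group and finitely generated (here: cyclic) edge groups, the vertex groups are finitely generated. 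With these adjustments your proof is complete and matches the intended argument.
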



	The following property have  been shown by
	D.~Kochloukova in \cite{D}. 
	
	\begin{Lemma}
		\label{l4}
		For a limit group $G$ its Euler characteristic $\chi(G)$ is non-positive. Moreover, 
		$\chi(G)=0$ if, and only if,  $G$ is abelian. 
	\end{Lemma}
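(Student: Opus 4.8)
The plan is to argue by induction on the height $\het(G)$, combining Proposition~\ref{prop:prin} with the additivity of the Euler characteristic over a finite graph of groups. Recall that limit groups are of type $F$ (in particular of type $FP$ and of finite cohomological dimension), so $\chi(G)=\sum_i(-1)^i\dim_\Q H_i(G,\Q)$ is a well-defined integer, and for a finite graph of groups whose vertex and edge groups are all of type $FP$ one has $\chi(\pi_1(\gG^\prime,\Lambda_0,\caT_0))=\sum_{v\in V(\Lambda_0)}\chi(\gG^\prime_v)-\sum_{e\in E(\Lambda_0)}\chi(\gG^\prime_e)$. We may assume $G\neq 1$ (the trivial group has $\chi=1$), and we record the easy half of the ``only if'' up front: an abelian limit group is finitely generated and torsion-free, hence isomorphic to $\Z^k$ with $k\geq 1$, so $\chi(G)=0$. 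It therefore remains to prove that $\chi(G)\leq 0$, and that $\chi(G)<0$ whenever $G$ is non-abelian.

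For the base case $\het(G)=0$, $G$ is free of rank $r\geq 1$, so $\chi(G)=1-r\leq 0$, with equality exactly when $r=1$, i.e.\ when $G\cong\Z$, i.e.\ when $G$ is abelian. For the inductive step, $\het(G)=n\geq 1$, apply Proposition~\ref{prop:prin} to write $G\cong\pi_1(\gG^\prime,\Lambda_0,\caT_0)$ with $\Lambda_0$ finite, each edge group infinite cyclic or trivial, and each vertex group finitely generated free abelian or a limit group of height $\leq n-1$. First pass to a \emph{reduced} decomposition by repeatedly collapsing any non-loop edge $e=\{u,v\}$ with $\gG^\prime_e=\gG^\prime_v$: this leaves $\pi_1$ unchanged, and since such a collapse replaces $\gG^\prime_u,\gG^\prime_v$ by $\gG^\prime_u\star_{\gG^\prime_v}\gG^\prime_v\cong\gG^\prime_u$, it introduces no new vertex groups, so properties (i)--(iii) persist. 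Because $n\geq 1$, $G$ is not free, and one checks that the reduced decomposition then has no trivial vertex group (a trivial vertex could carry only loops, which by connectedness would force $\Lambda_0$ to be a single vertex and $G$ to be free). Hence every $\chi(\gG^\prime_v)\leq 0$ — by the inductive hypothesis for the limit-group vertices of height $\leq n-1$, and directly for the $\Z^k$ vertices — while $\chi(\gG^\prime_e)\in\{0,1\}$, and the additivity formula yields $\chi(G)\leq 0$.

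For the sharp statement, suppose $G$ is non-abelian but $\chi(G)=0$; we seek a contradiction. The formula forces every $\chi(\gG^\prime_v)=0$ and every $\chi(\gG^\prime_e)=0$, so by the inductive hypothesis every vertex group is abelian and every edge group is infinite cyclic. Now no edge can survive. For a non-loop edge $e=\{v,w\}$ with edge group $\langle c\rangle$, reducedness gives $\langle c\rangle\subsetneq\gG^\prime_v$ and $\langle c\rangle\subsetneq\gG^\prime_w$ with $\gG^\prime_v,\gG^\prime_w$ abelian, so $\gG^\prime_v,\gG^\prime_w\subseteq C_G(c)$ and hence the non-abelian group $\gG^\prime_v\star_{\langle c\rangle}\gG^\prime_w$ lies in $C_G(c)$, contradicting the commutative transitivity of the limit group $G$. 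A loop at $v$ realises $G$ as an HNN extension of the abelian group $\gG^\prime_v$ over a cyclic subgroup; the CSA property of limit groups (maximal abelian subgroups are malnormal) forces such an extension to be degenerate, i.e.\ $G\cong\gG^\prime_v\times\Z$, again abelian. Hence the reduced decomposition has no edges, $G\cong\gG^\prime_v$ is abelian --- contradiction. Therefore $\chi(G)<0$, completing the induction and the proof.

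The main obstacle is the last paragraph: ruling out \emph{all} edges in a reduced decomposition with abelian vertex groups and infinite cyclic edge groups. The non-loop case is immediate from commutative transitivity, but the loop (HNN) case, and more generally the ``degenerate'' edges with $\gG^\prime_e$ equal to a vertex group — which cannot be removed by graph collapses — need the stronger CSA property together with some Bass--Serre bookkeeping (an abelian group acting on a tree with both elliptic and hyperbolic elements, or a normal $\Z^k$ in $\Z^k\rtimes_\phi\Z$ with $\phi\neq 1$, violates malnormality). That is where I expect the genuine work to lie; the rest is the routine induction sketched above.
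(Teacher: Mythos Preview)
The paper does not prove Lemma~\ref{l4}: it is stated with the attribution ``have been shown by D.~Kochloukova in \cite{D}'' and used as a black box. There is therefore no proof in the paper to compare yours against; what you have written is an independent argument.

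Your induction on $\het(G)$ together with additivity of $\chi$ over a finite graph of groups gives $\chi(G)\le 0$ correctly, and ``$G$ abelian $\Rightarrow\chi(G)=0$'' is immediate. The gap you flag is real but smaller than you fear. The sentence ``a loop at $v$ realises $G$ as an HNN extension of the abelian group $\gG^\prime_v$'' is only valid when the reduced graph has a single edge; with several loops $G$ is an \emph{iterated} HNN extension and the intermediate bases need not be abelian, so your single-loop argument does not apply as stated. The clean fix bypasses the loop/non-loop split. Once every vertex group is abelian and every edge group infinite cyclic, fix a maximal tree $\caT_0\subseteq\Lambda_0$ and let $M\subseteq G$ be the maximal abelian subgroup containing a chosen $\gG^\prime_{v_0}$. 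Walking along $\caT_0$ and using commutative transitivity at each edge (if $c$ generates the edge group then $C_G(c)$ is abelian, contains both incident vertex groups, and hence equals $M$) shows that every $\gG^\prime_v\subseteq M$. For each edge $e\notin\caT_0$ with stable letter $t_e$ and edge group $\langle c\rangle$, both $c$ and $t_e c t_e^{-1}$ lie in vertex groups, hence in $M$, so $t_eMt_e^{-1}\cap M\neq 1$ and the CSA property (malnormality of $M$) forces $t_e\in M$. Thus $G=\langle\{\gG^\prime_v\},\{t_e\}\rangle\subseteq M$ is abelian, the desired contradiction. Incidentally, your phrase ``the non-abelian group $\gG^\prime_v\star_{\langle c\rangle}\gG^\prime_w$ lies in $C_G(c)$'' is loose: what you actually obtain is that $\langle\gG^\prime_v,\gG^\prime_w\rangle\subseteq C_G(c)$ is abelian, and one must then invoke the Bass--Serre tree to see that this subgroup really is the amalgam; the argument above avoids that detour.
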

	By construction limit groups are of type $FP_{\infty}$ over $\Z$  (and so over $\Q$) and of
	finite cohomological dimension (see \cite[ Cor.~8]{D}))
	\subsection{The $p$-deficiency} 
	The  notion of $p$-\textbf{deficiency}  ($p$ a prime) of a profinite group $G$  was introduced in \cite[\S 2.2]{G}. Let $G$ be a finitely generated profinite group. Denote by $d(G)$ its minimal
	number of generators. If $M$ is a non-zero finite $G$-module we denote by $\dim M$
	the length of M as $\Z$-module and put 
	\begin{align}\label{eq01}
		\overline{\chi_2}(G,M)=\frac{-\dim H^2(G,M)+\dim  H^1(G,M)-\dim   H^0(G,M)}{\dim(M) }\in \Q\cup \{+\infty\}
	\end{align}
	and 
	\begin{align}
		\overline{\chi_1}(G,M)=\frac{\dim H^1(G,M)-\dim H^0(G,M)}{\dim(M)} \in \Q\cup \{-\infty\}.
	\end{align}
	\begin{defi}
		If $N$ is a closed subgroup of $G$ and $\mathcal{M}_p(N)$  is the set of all finite $\Z_p[[G]]$-modules on which $N$ acts trivially, we define  the following invariant:
		
		\begin{align}	
			\de_p(G,N)=\displaystyle\inf_{M\in \mathcal{M}_p(N) }\{1+\overline{\chi_2}(G,M)\}.
		\end{align}
		For simplicity, we put
		\begin{align}
			\de_p(G):=\de_p(G,\{1\}) 
		\end{align}
		The number $\de_p(G)$ is called the $p$-deficiency of G.
	\end{defi}
	
	Comparing this invariant
	with the deficiency  $\de(G)$ of $G$  we observe that
	\begin{align}\label{eq001}
		\de(G)\leq \de_p(G)\leq \dim_{\F_p}H^1(G, \F_p)-\dim_{\F_p}H^2(G, \F_p).
	\end{align}
	
	In \cite[Proposition 3.2]{G} the following was shown. 
	\begin{prop} \label{pp1}
		Let $G$ be a finitely generated profinite group and let $N$ be a
		normal subgroup of infinite index such that $H_1(N, \F_p)\neq 0$. If $\de_p(G, N)\geq 2$, then $\dim_{\F_p}H_1(N, \F_p)$ is infinite. 
	\end{prop}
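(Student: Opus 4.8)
The plan is to argue by contradiction, so suppose $V:=H_1(N,\F_p)$ is a nonzero finite-dimensional $\F_p$-vector space, say $\dim_{\F_p}V=d\ge 1$, and set $Q:=G/N$, which is infinite because $[G:N]=\infty$.

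First I would squeeze information out of the deficiency hypothesis by feeding in permutation modules. If $U\le G$ is open with $N\le U$, then $\F_p[G/U]\in\mathcal{M}_p(N)$, so $1+\overline{\chi_2}(G,\F_p[G/U])\ge 2$; by Shapiro's lemma $H^i(G,\F_p[G/U])\cong H^i(U,\F_p)$ for all $i$, and since $\dim_{\F_p}H^0(U,\F_p)=1$ this rearranges to
\[
\dim_{\F_p}H_1(U,\F_p)=\dim_{\F_p}H^1(U,\F_p)\ge[G:U]+1+\dim_{\F_p}H^2(U,\F_p)\ge[G:U]+1 .
\]
Now let $W:=U/N$, an open subgroup of $Q$ with $[Q:W]=[G:U]$. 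The five-term exact sequence in $\F_p$-homology attached to $1\to N\to U\to W\to 1$ ends in
\[
\bigl(H_1(N,\F_p)\bigr)_{W}\longrightarrow H_1(U,\F_p)\longrightarrow H_1(W,\F_p)\longrightarrow 0 ,
\]
and $\dim_{\F_p}\bigl(H_1(N,\F_p)\bigr)_{W}\le\dim_{\F_p}V=d$, so $\dim_{\F_p}H_1(W,\F_p)\ge[Q:W]+1-d$. Since every open subgroup of $Q$ arises as such a $W$, we conclude that $Q$ has $\F_p$-rank gradient at least $1$: every open $W\le Q$ satisfies $\dim_{\F_p}H_1(W,\F_p)\ge[Q:W]+1-d$.

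The remaining step — and the crux of the argument — is to turn this into a contradiction. Here I would refine the previous step with the twisted permutation modules $\F_p[G/U]\otimes_{\F_p}S\cong\mathrm{Ind}_U^G(\mathrm{Res}_U S)\in\mathcal{M}_p(N)$, where $S$ runs over the finite $\F_p[Q]$-modules: feeding these into $1+\overline{\chi_2}(G,\cdot)\ge 2$ and again estimating via the five-term sequence of $1\to N\to U\to W\to 1$ — using that $N$ acts trivially on $S$, so that $H^1(N,S)=\mathrm{Hom}_{\F_p}(V,S)$ and all transgressions in sight are bounded in terms of $d$ and $\dim_{\F_p}S$ alone — should yield, for every open $W\le Q$, a lower bound on $\overline{\chi_2}(W,\mathrm{Res}_W S)$ of order $[Q:W]$. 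Applied to open subgroups $W$ of large index this forces the number of defining relations of a suitable pro-$p$ quotient of $W$ to be quadratically dominated by its generator rank, so the Golod--Shafarevich--Zelmanov mechanism makes $W$, and hence $G$ after pulling back along $G\to Q$, $p$-large: some finite-index subgroup of $G$ maps continuously onto a non-abelian free pro-$p$ group. Intersecting the resulting kernel data with $N$ then produces an infinite $\F_p$-independent family of homomorphisms $N\to\F_p$, contradicting $\dim_{\F_p}H_1(N,\F_p)=d<\infty$.

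I expect this last paragraph to be the main obstacle. The linear bound $\dim_{\F_p}H_1(W,\F_p)\ge[Q:W]+1-d$ on the quotient $Q=G/N$ by itself is not enough — one can realise such a $Q$ (for instance a Demushkin group of rank $3$) that is not itself $p$-large — so the argument must genuinely exploit that the deficiency hypothesis constrains the extension $1\to N\to G\to Q\to 1$, not merely the group $Q$. Making the twisted-module estimate sharp enough to survive passage to a genuine pro-$p$ group, and checking that the free pro-$p$ quotient it produces detects new homology \emph{inside} $N$ rather than only in $Q$, is where the real work lies.
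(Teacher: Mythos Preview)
The paper does not prove this proposition; it is quoted verbatim from \cite[Proposition~3.2]{G}, so there is no argument in the present paper to compare your attempt against.

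On its own merits: your first two steps are correct. Feeding the permutation modules $\F_p[G/U]$ (for open $U\supseteq N$) into the hypothesis $\de_p(G,N)\ge 2$ and applying Shapiro's lemma does give $\dim_{\F_p}H^1(U,\F_p)\ge[G:U]+1$, and the five-term sequence for $1\to N\to U\to W\to 1$ then yields $\dim_{\F_p}H_1(W,\F_p)\ge[Q:W]+1-d$ for every open $W\le Q=G/N$.

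The gap is exactly where you locate it, and it is genuine. Your third paragraph is a plan, not an argument: you propose twisted permutation modules, a Golod--Shafarevich inequality for open subgroups of $Q$, $p$-largeness of $G$, and finally an infinite independent family in $\mathrm{Hom}(N,\F_p)$. None of these steps is carried out, and the last one in particular is not automatic --- a continuous surjection from an open subgroup of $G$ onto a non-abelian free pro-$p$ group need not restrict nontrivially to $N$, so the passage from $p$-largeness back to $\dim_{\F_p}H_1(N,\F_p)=\infty$ requires its own justification. You yourself observe that the linear bound on $Q$ alone cannot suffice (Demushkin groups of rank $\ge 3$ realise it), so the whole weight of the contradiction rests on material you have only outlined. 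As written, the sketch does not close.
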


	\section{Homological approximations for the   profinite completion  of  limit group}
	\label{nsb}
\begin{defi}[Euler characteristic]
		Let   $G$ be a profinite group  and $p$ a prime number   satisfying the following  conditions:   
		\begin{itemize}
			\item[(i)]  The  $p$-cohomological dimension $cd_p(G)$ of $G$  is finite;
			\item[(ii)] $\dim_{\F_p}H_n(G,\F_p)< \infty$, for all $n\in \N_0$.
		\end{itemize} 
	The $p$-Euler characteristic of    $G$  is defined as
	\begin{align} \label{e01}
 \chi_p(G)=\displaystyle\sum_{0\leq i\leq cd_p(G)}{(-1)^i \dim_{\F_p}{H_i(G,\F_p)}}
	\end{align} 
\end{defi}
The following Theorem  has  been shown by  T.~Zapata and P.~Zalesskii.
\begin{thm}[Theorem F in  \cite{TZ}]\label{TZa}
Let $G$ be  a limit group. Then:
\begin{itemize}
\item[(a)] $\widehat{G}$ is of homological type $FP_{\infty}$ over $\F_p$ and over $\Z_p$. 
	\item[(b)] The  Euler  characteristic $\chi({G})$ of $G$  coincides 	with the  p-Euler characteristic $\chi_p(\widehat{G})$ of $\widehat{G}$; hence 	it is always non-negative  and equals $0$ if, and only if, $\widehat{G}$ is abelian.
	\item[(c)] If $U_1\geq U_2 \geq U_3 \geq \cdots$ is a descending sequence of open normal subgroups 	of $\widehat{G}$ with trivial intersection, then 
	\begin{equation*}\label{e02}	
	\displaystyle\lim_{i\rightarrow\infty} \frac{\dim_{\F_p}H_j(U_i, \F_p)}{[\widehat{G} : U_i]}=\displaystyle\lim_{i\rightarrow\infty} \frac{\dim_{\F_p}H^j(U_i, \F_p)}{[\widehat{G} : U_i]}= \left \{ \begin{matrix} -\chi_p(\widehat{G}), & \mbox{if }\mbox{ j=1}
	\\ 0, & \mbox{otherwise }\end{matrix}\right. 
	\end{equation*}
	
\end{itemize}
\end{thm}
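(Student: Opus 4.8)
\emph{Overall strategy and part (a).} I would prove all three parts together by induction on the height $\het(G)=n$, staying inside the category of profinite groups. The base cases are finitely generated free and finitely generated free-abelian $G$, where $\widehat G$ is free profinite of finite rank (so $FP_\infty$ with $cd=1$) or $\widehat{\Z}^{d}$ (so $FP_\infty$ with homology of constant finite dimension). For $n\ge 1$ and $G$ non-abelian, Proposition~\ref{prop:prin} writes $G=\pi_1(\gG',\Lambda_0,\caT_0)$, and the decisive structural input is the \emph{efficiency} of limit groups: $\widehat G$ is the profinite fundamental group of the graph of profinite groups obtained by replacing each $\gG'_v$ by $\widehat{\gG'_v}$ (finitely generated profinite abelian, or the completion of a limit group of height $\le n-1$) and each $\gG'_e$ by the procyclic or trivial group $\widehat{\gG'_e}$, all edge maps remaining injective. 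Since $\widehat{\Z}$, $\widehat{\Z}^{d}$ and — by the inductive hypothesis — the $\widehat{\gG'_v}$ are $FP_\infty$ over $\F_p$ and $\Z_p$, and since the graph-of-groups Mayer--Vietoris long exact sequence propagates $FP_\infty$ through profinite amalgams and HNN extensions along $FP_\infty$ edge subgroups, $\widehat G$ is $FP_\infty$ over $\F_p$ and $\Z_p$.

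\emph{Part (b).} The $p$-Euler characteristic is additive over an efficient graph of profinite $FP_\infty$-groups: $\chi_p(\widehat G)=\sum_{v}\chi_p(\widehat{\gG'_v})-\sum_{e}\chi_p(\widehat{\gG'_e})$, just as $\chi(G)=\sum_{v}\chi(\gG'_v)-\sum_{e}\chi(\gG'_e)$ over the discrete decomposition. As $\chi_p(\widehat{\Z})=0=\chi(\Z)$, $\chi_p(\widehat{\Z}^{d})=0=\chi(\Z^{d})$ for $d\ge 1$, and $\chi_p(F)=\chi(F)$ for $F$ finitely generated free, the inductive hypothesis gives $\chi_p(\widehat G)=\chi(G)$; Lemma~\ref{l4} then yields the sign and the characterisation of equality (and $\widehat G$ is abelian exactly when $G$ is). One could alternatively use that limit groups are good in the sense of Serre, so that $H_i(\widehat G,\F_p)\cong H_i(G,\F_p)$, whence $\chi_p(\widehat G)$ equals the $\F_p$-Euler characteristic of $G$, which in turn equals $\chi(G)$ because the torsion of the integral homology of the $FP_\infty$-group $G$ cancels in the alternating sum.

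\emph{Part (c).} Fix a descending chain $U_1\ge U_2\ge\cdots$ of open normal subgroups of $\widehat G$ with $\bigcap_iU_i=1$; since limit groups are torsion-free, $\widehat G$ is infinite (otherwise everything is trivial), so $[\widehat G:U_i]\to\infty$. Put $\beta_j^{(i)}=\dim_{\F_p}H_j(U_i,\F_p)/[\widehat G:U_i]$. Tensoring a finite free $\F_p[[\widehat G]]$-resolution of $\F_p$ with $\F_p[\widehat G/U_i]$ and using Shapiro's lemma gives $\sum_j(-1)^j\beta_j^{(i)}=\chi_p(\widehat G)$ for every $i$. Letting $U_i$ act on the profinite Bass--Serre tree $T$: because $U_i\trianglelefteq\widehat G$, the induced graph of groups on $U_i\backslash T$ has, over a fixed $v\in V(\Lambda_0)$, exactly $[\widehat G:U_i]/[\widehat{\gG'_v}:\widehat{\gG'_v}\cap U_i]$ vertices, each with vertex group $\cong\widehat{\gG'_v}\cap U_i$; since $\bigcap_i(\widehat{\gG'_v}\cap U_i)=1$ in $\widehat{\gG'_v}$ and $\widehat{\gG'_v}\cap U_i\trianglelefteq\widehat{\gG'_v}$, the inductive hypothesis gives $\dim_{\F_p}H_j(\widehat{\gG'_v}\cap U_i,\F_p)/[\widehat{\gG'_v}:\widehat{\gG'_v}\cap U_i]\to\beta_j(\widehat{\gG'_v})$, which vanishes for $j\ge 2$. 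A procyclic edge group $\widehat{\gG'_e}$ has $[\widehat{\gG'_e}:\widehat{\gG'_e}\cap U_i]\to\infty$, so the total dimension of $H_{*}$ of the cyclic edge groups over $e$ is $o([\widehat G:U_i])$, while trivial edge groups contribute only in degrees $\le 1$. Feeding this into the graph-of-groups Mayer--Vietoris sequence gives $\beta_j^{(i)}\to\sum_v\beta_j(\widehat{\gG'_v})=0$ for $j\ge 2$, and $\beta_0^{(i)}\to 0$ trivially; the Euler-characteristic identity then forces $\beta_1^{(i)}\to-\chi_p(\widehat G)$. Finally $\dim_{\F_p}H^j(U_i,\F_p)=\dim_{\F_p}H_j(U_i,\F_p)$ by universal coefficients for profinite (co)homology of an $FP_\infty$-group with finite-dimensional homology, which gives the cohomological limits.

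\emph{Main obstacle.} The real content is the efficiency statement sitting under every step: that passing to the profinite completion leaves the cyclic-edge graph-of-groups decomposition intact and does not alter the topology of the vertex and edge groups. This is precisely where the separability properties packaged in the class $\mathcal{Z(C)}$ enter, and it does not reduce to formal manipulation. A secondary, lower-level point is the orbit-counting for the graphs of groups on $U_i\backslash T$: one must notice that procyclic edge groups contribute a \emph{negligible} number of edges (their intersections with $U_i$ shrink), whereas trivial edge groups do not, so the degree-$\le 1$ part of the Mayer--Vietoris sequence is pinned down only indirectly, via the Euler-characteristic identity.
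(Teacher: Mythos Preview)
The paper does not prove this statement. It is introduced with the sentence ``The following Theorem has been shown by T.~Zapata and P.~Zalesskii'' and labelled \emph{Theorem F in \cite{TZ}}; no proof is given. It is used purely as an external input (for instance in the proofs of Theorem~A and Theorem~B), so there is nothing in the present paper to compare your argument against.

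That said, your sketch is a reasonable reconstruction of the standard approach, and its shape matches what the paper itself does when proving the \emph{generalisation} Theorem~A: induction on height, the structural decomposition of Proposition~\ref{prop:prin}, efficiency of the profinite topology to pass to a profinite graph of groups, and then Lemma~\ref{l20} (the Mayer--Vietoris estimate) applied to the vertex and edge stabilisers. One small correction: in part~(b) you write that $\chi_p(\widehat G)$ ``is always non-negative''; the paper (and Lemma~\ref{l4}) say $\chi(G)\le 0$, so $-\chi_p(\widehat G)\ge 0$. Also, for part~(c) your handling of the degree-$1$ edge contribution is slightly glib: a procyclic edge group $\widehat{\gG'_e}$ has $\dim_{\F_p}H_1(\widehat{\gG'_e}\cap U_i,\F_p)=1$ independently of $i$, so the vanishing of its normalised contribution really does require $[\widehat{\gG'_e}:\widehat{\gG'_e}\cap U_i]\to\infty$, which follows from $\bigcap_i U_i=1$ as you indicate --- this is exactly the point the paper makes explicit in its proof of Theorem~A(3).
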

\begin{rem}
The proof of the following Lemma is analogous to the proof of its pro-p version (see  \cite[\S 5,  Thm.~5.1]{AC}). Note that,  the functions $\rho_1$ and $\rho_2$  do not depend on the domain $G$  when the sequence $\{U_i\}_{i\geq 1}$  is formed by normal open subgroups. 
\end{rem}

\begin{Lemma}\label{l20}
		Let $p$ be a prime number. Let G be a profinite group acting on a profinite tree
		$T$ such that $T/G$ is finite and all vertex and edge stabilizers are of type $FP_{\infty}$ over $\Z_p$.  Let  $j\geq 1$ be  an integer     and $\{U_i\}_{i\geq 1}$  a sequence of open  normal subgroups of $G$  such that for all $i$ we have $U_{i+1}\leq U_i$  and  
		\begin{eqnarray}
		\displaystyle\lim_{i\rightarrow\infty}\displaystyle\dim_{\F_p} H_j(U_i\cap G_v,\F_p)/[G_v:(U_i\cap G_v)]=\rho_1(v)<\infty\label{111}\\
		\displaystyle\lim_{i\rightarrow\infty}\displaystyle\dim_{\F_p} H_{j-1}(U_i\cap G_e,\F_p)/[G_e:(U_i\cap G_e)]=\rho_2(e)<\infty\label{2}
		\end{eqnarray}
		
		for all $v\in V(T)/G$ and $e\in E(T)/G$, where   
		$\rho_1(v)$, $\rho_2(e)$ are continuous functions with domains $\{v\}_{v\in V(T)/G}$ and $\{e\}_{e\in E(T)/G}$ respectively. Then 
		
		$$	\displaystyle\sup_{\overrightarrow{i}	}\displaystyle\dim_{\F_p} H_{j}(U_i,\F_p)/[G:U_i]\leq \displaystyle
		\sum_{v\in V(T)/G} \displaystyle\rho_1(v)+ \displaystyle
		\sum_{e\in E(T)/G} \displaystyle\rho_2(e)$$
		
		In particular, if  $\rho_1(v)$ and $\rho_2(e)$ are the zero map, then 
		$$	\displaystyle\lim_{i \rightarrow \infty
		}\displaystyle\dim_{\F_p} H_{j}(U_i,\F_p)/[G:U_i]=0$$
	\end{Lemma}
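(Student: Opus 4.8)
\emph{The plan} is to follow the pro-$p$ template of \cite[Thm.~5.1]{AC}: exploit the fundamental short exact sequence attached to the action of $G$ on the profinite tree $T$, apply Shapiro's lemma together with an orbit count to the terms, and then pass to the limit.

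First I would record the fundamental sequence. Since $T$ is a profinite tree and $T/G$ is finite, choosing representatives of the $G$-orbits of vertices and of edges and invoking the defining homological property of a profinite tree yields a short exact sequence of $\Z_p[[G]]$-modules
$$0\longrightarrow\bigoplus_{e\in E(T)/G}\Z_p[[G/G_e]]\longrightarrow\bigoplus_{v\in V(T)/G}\Z_p[[G/G_v]]\longrightarrow\Z_p\longrightarrow 0 .$$
Because the quotient term $\Z_p$ is $\Z_p$-flat, reduction modulo $p$ keeps this exact, and $\F_p\,\widehat{\otimes}_{\Z_p}\Z_p[[G/G_x]]\cong\F_p[[G/G_x]]$; writing $A=\bigoplus_{e}\F_p[[G/G_e]]$ and $B=\bigoplus_{v}\F_p[[G/G_v]]$ I obtain a short exact sequence $0\to A\to B\to\F_p\to 0$ of $\F_p[[G]]$-modules.

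Next, for each fixed $i$ I would apply the homology functor $H_\ast(U_i,-)$ to this sequence; the segment $H_j(U_i,B)\to H_j(U_i,\F_p)\to H_{j-1}(U_i,A)$ of the long exact sequence gives
$$\dim_{\F_p}H_j(U_i,\F_p)\le\dim_{\F_p}H_j(U_i,B)+\dim_{\F_p}H_{j-1}(U_i,A).$$
To evaluate the two right-hand terms I would use that $U_i$ is normal in $G$: for $x$ a vertex or edge in a chosen orbit, $G/G_x$ splits into $[G:U_iG_x]$ $U_i$-orbits, each $U_i$-isomorphic to some $U_i/(U_i\cap G_x^{\,g})$, so $\F_p[[G/G_x]]$ is a finite direct sum of induced $\F_p[[U_i]]$-modules and, by Shapiro's lemma, $\dim_{\F_p}H_n(U_i,\F_p[[G/G_x]])=[G:U_iG_x]\cdot\dim_{\F_p}H_n(U_i\cap G_x,\F_p)$, the conjugate $U_i\cap G_x^{\,g}$ being abstractly isomorphic to $U_i\cap G_x$. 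Since $G_x$ is of type $FP_{\infty}$ over $\Z_p$, so is the open subgroup $U_i\cap G_x$, hence all these dimensions are finite and, in particular, so is $\dim_{\F_p}H_j(U_i,\F_p)$. Dividing the displayed inequality by $[G:U_i]$ and using $[G:U_i]=[G:U_iG_x]\,[G_x:U_i\cap G_x]$, I get, for every $i$,
$$\frac{\dim_{\F_p}H_j(U_i,\F_p)}{[G:U_i]}\le\sum_{v\in V(T)/G}\frac{\dim_{\F_p}H_j(U_i\cap G_v,\F_p)}{[G_v:U_i\cap G_v]}+\sum_{e\in E(T)/G}\frac{\dim_{\F_p}H_{j-1}(U_i\cap G_e,\F_p)}{[G_e:U_i\cap G_e]}.$$
Both sums are finite because $T/G$ is, and by hypotheses \eqref{111} and \eqref{2} the right-hand side converges to $\sum_{v}\rho_1(v)+\sum_{e}\rho_2(e)$; taking limits superior over $i$ gives the asserted inequality. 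If moreover $\rho_1$ and $\rho_2$ vanish identically the right-hand side tends to $0$, and since the left-hand side is non-negative the limit exists and equals $0$.

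\emph{The main obstacle} I expect is the very first step: making rigorous that a profinite tree acted on with finite quotient produces the $\F_p$-coefficient exact sequence above — one must use the defining exactness of a profinite tree and the finiteness of $T/G$ so as to be working with honest finite direct sums and not completed coproducts — and then the orbit/Shapiro bookkeeping, where normality of the $U_i$ is precisely what makes the $U_i$-orbit structure of each $G/G_x$ uniform. Everything after those two points is formal homological algebra.
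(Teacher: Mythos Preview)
Your proposal is correct and follows precisely the approach the paper indicates: the paper does not give an independent proof but simply remarks that the argument is analogous to its pro-$p$ version \cite[\S 5, Thm.~5.1]{AC}, and your write-up reproduces exactly that template (fundamental exact sequence of the profinite tree, long exact sequence, Shapiro plus orbit counting using normality of the $U_i$, then passage to the $\limsup$). The two points you flag as obstacles --- the finite direct-sum form of the exact sequence and the uniformity of the $U_i$-orbit structure coming from normality --- are indeed the only places requiring care, and you handle them as in \cite{AC}.
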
 
	
	The Lemma \ref{l20}  will  turn out to be useful to generalize  the Theorem \ref{TZa}(c). 
\begin{proof}[\textbf{Proof of Theorem A}]

\noindent
		\\ \item[\textbf{(1).}]
		
		We induct on the height of G. First if height of $G$ is 0, then $G$ is a 
		free group, hence  $cd_p(\widehat{G} )=cd(G)=1$ and  $cd_p(U_i)= 1$. It follows that  $H_j(U_i,\F_p)=0$, for $j\geq 2$ and $i\geq 1$. 
		
		Assume that the theorem holds for limit groups  of smaller height.  Let
		$n$ be the height of G and $G_n=G_{n-1}*_{C_{n-1}}A_{n-1}$, where $A_{n-1}=C_{n-1}\times B$ is a free abelian of finite rank. By Proposition~\ref{prop:prin},
		$G$ is isomorphic to the fundamental group $\pi_1(\Upsilon,\Lambda,\caT)$
		of a graph of groups $\Upsilon$ based on a finite connected graph $\Lambda$ whose edge groups are either infinite cyclic or trivial, and whose vertex groups are either limit group of height at most $n-1$ or free abelian groups.

		Since the profinite topology on $G$ is efficient (see  \cite[\S 3, Thm.~3.8]{gru} ), then $\widehat{G}$ is isomorphic to the fundamental group of a finite graph of groups whose groups of edges are $\{\widehat{G}_e\}_{e\in E(\Lambda)}$, where $\widehat{G}_e\cong \widehat{\Z}$ or \{1\} and the vertex groups are
		$\{\widehat{G}_v\}_{v\in V(\Lambda)}$. 	By Theorem \ref{TZa}(a), $\widehat{G}_v$ and  $\widehat{G}_e$ are of homological type $FP_{\infty}$  over $\Z_p$. 
		
		For $v\in V(\Lambda)$ suppose that $G_v$ is non-abelian. Note that the height of $G_v$ is smaller than the height of $G$ since $G_v$ is inside of 	a conjugate of $G_{n-1}$ or a conjugate of $A_{n-1}$. By induction applied for the group
		$G_v$ 	for a fixed $j>2$ we have

		\begin{align}
			\rho_1(i,v)=\dim_{\F_p}{H_j(U_i\cap \widehat{G}_v,\F_p)}/[\widehat{G}_v:U_i\cap \widehat{G}_v]
		\end{align}
		tends to 0 as $i$ goes to infinity.

		If $G_v$  is abelian, then $\widehat{G}_v$ is free abelian profinite (isomorphic to $\widehat{\Z}^m$, where  $m$  is a finite natural number), hence $U_i\cap \widehat{G}_v\simeq \widehat{G}_v$  and  $$\dim_{\F_p}{H_j(U_i\cap \widehat{G}_v,\F_p)}= \dim_{\F_p}{H_j(U_{i+1}\cap\widehat{G}_v,\F_p)},$$ for every  $i\geq 1$. If  $\{[\widehat{G}_v:U_i\cap \widehat{G}_v]\}_{i\geq 1}$ tends to infinity, so for a fixed   $j\geq 0$ 
		\begin{align*}
			\rho_1(i,v)=\dim_{\F_p}{H_j(U_i\cap \widehat{G}_v,\F_p)}/[\widehat{G}_v:U_i\cap \widehat{G}_v]
		\end{align*}
		tends to 0 as i goes to infinity.  	If  $\{[\widehat{G}_v:U_i\cap \widehat{G}_v]\}_{i\geq 1}$ does not tend to
		infinity, then there is $i_0\in \N$ such that  $U_{i_0}\cap \widehat{G}_v=U_{i_0+1}\cap \widehat{G}_v=U_{i_0+2}\cap \widehat{G}_v=\cdots$, hence   $\displaystyle\bigcap_{i\geq i_0}{(U_i\cap \widehat{G}_v)}=U_{i_0}\cap \widehat{G}_v$.  So for $i\geq i_0$ we have 
		\begin{align}
			cd_p(U_{i}\cap \widehat{G}_v)= cd_p(U_{i_0}\cap \widehat{G}_v)=cd_p\big(\displaystyle\bigcap_{i\geq i_0}{(U_i\cap \widehat{G}_v)}\big)\leq cd_p\big(\displaystyle\bigcap_{i\geq 1}{U_i}\big)\leq 2, 
		\end{align}
		It follows  	$H_j(U_i\cap \widehat{G}_v,\F_p)=0$, para $i\geq i_0$,  $j\geq 3$. Therefore, for a fixed $j>2$,  	
		$\rho_1(i,v)$
		tends to 0 as i goes to infinity.
		
		Observe that $\widehat{G}_e$ is isomorphic to \{1\} or     $\widehat{\Z}$   for all $ e\in E(\Lambda)$, hence $cd_p(U_i\cap \widehat{G}_e)\leq 1$ and $H_{j-1}(U_i\cap \widehat{G}_e,\F_p)=0$, para $j\geq 3$.   Therefore,  we
		have for any $j>2$ that  
		\begin{align}
			\rho_1(i,e)=\dim_{\F_p}{H_j(U_i\cap \widehat{G}_e,\F_p)}/[\widehat{G}_e:U_i\cap \widehat{G}_e]
		\end{align}
		tends to 0 as i goes to infinity.

		Then by Lemma \ref{l20} the result follows for $j>2$.

		\item[\textbf{(2).}]  
		Since
		\begin{align}
			\chi_p(\widehat{G})=\chi_p(U_i)/[\widehat{G}:U_i]=\sum_{j\geq 0}{(-1)^j \dim_{\F_p}{H_j(U_i,\F_p)}/[\widehat{G}:U_i]}
		\end{align}
		We define  a map   $L(U):=\dim_{\F_p}H_1(U,\F_p)-\dim_{\F_p}H_2(U,\F_p)$, where $U\leq_o\widehat{G}$, hence we have    
		
		\begin{align}
			\chi_p(\widehat{G})=\sum_{j\geq 3}{ \dim_{\F_p}{H_j(U_i,\F_p)}/[\widehat{G}:U_i]}-{L(U_i)}/[\widehat{G}:U_i]+1/[\widehat{G}:U_i].
		\end{align}
		
		It follows  \begin{align}
			\chi_p(\widehat{G})=\sum_{j\geq 3}{\lim_{i\rightarrow\infty}{ \dim_{\F_p}H_j(U_i,\F_p)}/[\widehat{G}:U_i]}-\lim_{i\rightarrow\infty}{L(U_i)/[\widehat{G}:U_i]}+\lim_{i\rightarrow\infty}{1/[\widehat{G}:U_i]}.
		\end{align}
		
		Hence, 
		by applying $(1)$ and hypothesis, one concludes that
		$$\chi_p(\widehat{G})=-\lim_{\overrightarrow{i}}{L(U_i)/[\widehat{G}:U_i]}.$$
		
		\item[\textbf{(3).}] Note that if  $\displaystyle\bigcap_{i\geq 1}{U_i}=1$, then   ${[\widehat{G}:U_i]}_{i\geq 1}$ tends to $\infty$ as i goes to infinity. 
		
		For $v\in V(\Lambda)$ suppose that $G_v$ is non-abelian. Note that the height of $G_v$ is smaller than the height of $G$ since $G_v$ is inside of 	a conjugate of $G_{n-1}$ or a conjugate of $A_{n-1}$. By induction applied for the group
		$G_v$  we have 
		$$\displaystyle\lim_{\overrightarrow{i}}{\dim_{\F_p}{H_2(U_i\cap \widehat{G}_v,\F_p)}/[\widehat{G}_v:U_i\cap \widehat{G}_v]}=0,$$
		
		In the case that  $G_v$ is abelian we have that the same argument which was used 
		in order to prove  (1) implies that  $$\displaystyle\lim_{\overrightarrow{i}}{\dim_{\F_p}{H_2(U_i\cap \widehat{G}_v ,\F_p)}/[\widehat{G}:U_i]}=0.$$
		Observe that $U_i\cap \widehat{G}_e$ is isomorphic to \{1\} or     $\widehat{\Z}$   for all $ e\in E(\Lambda)$, hence
		we have  $\dim_{\F_p}H_1(U_i\cap \widehat{G}_e,\F_p)\leq 1$ for all $i\geq 1$. Therefore  $$\displaystyle\lim_{\overrightarrow{i}}\dim_{\F_p}{H_2(U_i\cap \widehat{G}_e,\F_p)}/[\widehat{G}_e:U_i\cap \widehat{G}_e]\leq \displaystyle\lim_{\overrightarrow{i}}{1/[\widehat{G}_e:\widehat{G}_e\cap U_i]}=0.$$ 
		Then by  Lemma \ref{l20} 
		\begin{align}
			\displaystyle\lim_{\overrightarrow{i}}{\dim_{\F_p}{H_2(U_i,\F_p)}/[\widehat{G}:U_i]}=0,
		\end{align}
		and hence by (2)
		\begin{align*}
			\lim_{\overrightarrow{i}}{\dim_{\F_p}{H_1(U_i,\F_p)\big)}/[\widehat{G}:U_i]}&=\lim_{\overrightarrow{i}}\dim_{\F_p}{L(U_i)/[\widehat{G}:U_i]} + \lim_{\overrightarrow{i}}\dim_{\F_p}{{H_2(U_i,\F_p)\big)}/[\widehat{G}:U_i]}\\
			&=-\chi_p(\widehat{G})+0=-\chi_p(\widehat{G}). 
	\end{align*}
By Theorem \ref{TZa}(b),		
	\begin{align*}
	\lim_{\overrightarrow{i}}{\dim_{\F_p}{H_1(U_i,\F_p)\big)}/[\widehat{G}:U_i]}=-\chi_p(\widehat{G})=-\chi(G)\geq 0. 
\end{align*}
		
	\end{proof}

The following fact will  be useful for  showing the analogous of  Bridson and  Howie's Theorem  for the case of the profinite completion  of limit groups.
	\begin{prop}\label{p:4}
		Let $G$ be a torsion free profinite group  and let $N$ be  a normal subgroup of $G$.  Let   $p$ be  a prime number and suppose that there is   a  sequence $\{V_{i}\}_{i\geq 1}$ of open normal  subgroups    of ${G}$  such that   $V_{i+1}\leq V_i$ for all  $i\geq 1$ and  $\displaystyle\bigcap_{i\geq 1}{V_i}=1$  satisfying the following conditions
		\begin{itemize}
			\item[$(i)$] There is $k_1>0$, such that  $\dim_{\F_p}H_1(NV_i,\F_p)\leq k_1[G: NV_i]$, for all $i\geq 1$. 
			\item[$(ii)$] There is  $k_2>0$, such that  $\dim_{\F_p}H_1(N\cap V_i,\F_p)\leq k_2[N:N\cap V_i]$, for all $i\geq 1$.
		\end{itemize}  If $\displaystyle\lim_{i\rightarrow\infty}    \displaystyle{\dfrac{\dim_{\F_p}H_1(V_i,{\F_p})}{[G:V_i]}}$ exists and  is  positive, then $|G:N|<~\infty$.
	\end{prop}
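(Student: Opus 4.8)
The plan is to argue by contradiction: supposing $[G:N]=\infty$, I will show that $\displaystyle\lim_{i\to\infty}\dim_{\F_p}H_1(V_i,\F_p)/[G:V_i]=0$, which is incompatible with the hypothesis that this limit exists and is positive. The mechanism is to sandwich $H_1(V_i,\F_p)$ between the first homology of the two closed subgroups canonically attached to $V_i$, namely $N\cap V_i$ (which is normal in $V_i$, as $N\trianglelefteq G$) and $NV_i$, and then to divide by $[G:V_i]$ using the index identities $[NV_i:V_i]=[N:N\cap V_i]$ and hence $[G:V_i]=[G:NV_i]\cdot[N:N\cap V_i]$, both furnished by the second isomorphism theorem applied to $N\trianglelefteq NV_i$ and $V_i\leq NV_i$.

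First I would record the homological estimate. The five-term exact sequence (equivalently, the Lyndon--Hochschild--Serre spectral sequence with $\F_p$-coefficients) for $1\to N\cap V_i\to V_i\to V_i/(N\cap V_i)\to 1$ gives
\[
\dim_{\F_p}H_1(V_i,\F_p)\ \leq\ \dim_{\F_p}H_1(N\cap V_i,\F_p)+\dim_{\F_p}H_1\big(V_i/(N\cap V_i),\F_p\big).
\]
Now $V_i/(N\cap V_i)\cong NV_i/N$ is a continuous quotient of $NV_i$, and the last map of the five-term sequence for $1\to N\to NV_i\to NV_i/N\to 1$ is onto, so $\dim_{\F_p}H_1(V_i/(N\cap V_i),\F_p)\leq\dim_{\F_p}H_1(NV_i,\F_p)$. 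Inserting hypotheses $(ii)$ and $(i)$,
\[
\dim_{\F_p}H_1(V_i,\F_p)\ \leq\ k_2\,[N:N\cap V_i]+k_1\,[G:NV_i],
\]
which incidentally shows $\dim_{\F_p}H_1(V_i,\F_p)<\infty$, so that the quotient in the statement is meaningful.

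Next I would divide by $[G:V_i]=[G:NV_i]\cdot[N:N\cap V_i]$, obtaining
\[
\frac{\dim_{\F_p}H_1(V_i,\F_p)}{[G:V_i]}\ \leq\ \frac{k_2}{[G:NV_i]}+\frac{k_1}{[N:N\cap V_i]},
\]
and it remains only to see that both denominators tend to infinity. For $[N:N\cap V_i]$ this follows from $\bigcap_i(N\cap V_i)=N\cap\bigcap_iV_i=1$ together with $N$ being infinite (a non-trivial closed subgroup of the torsion-free group $G$): a descending chain of open subgroups of $N$ of bounded index stabilizes, and its stable value would equal $\bigcap_i(N\cap V_i)=1$, forcing $N$ to be finite. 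For $[G:NV_i]$, note that $\{V_i\}_i$ is a neighbourhood basis at $1$ (a descending chain of open subgroups with trivial intersection), so the open subgroups $NV_i$ are cofinal among the open subgroups of $G$ containing $N$, whence $\bigcap_iNV_i=N$; if $[G:NV_i]$ were bounded the chain $(NV_i)_i$ would stabilize to $NV_{i_0}=N$, making $N$ open and $[G:N]$ finite, against our assumption. Hence the right-hand side above tends to $0$, the limit in the statement equals $0$, and this contradiction proves $[G:N]<\infty$.

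The step I expect to demand the most care is the divergence $[G:NV_i]\to\infty$; the remainder is routine diagram chasing together with the counting of indices. Two ingredients are used there essentially: that $N$ is closed (so that $\bigcap_iNV_i=N$, via the neighbourhood-basis remark), and that a strictly relevant descending chain of finite-index open subgroups cannot have bounded index. One can also observe that the argument never uses the existence of the limit, only that $\limsup_i\dim_{\F_p}H_1(V_i,\F_p)/[G:V_i]>0$, which is already contradicted.
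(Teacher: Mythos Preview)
Your proof is correct and follows essentially the same route as the paper: argue by contradiction, bound $\dim_{\F_p}H_1(V_i,\F_p)$ via the five-term exact sequences for the extensions $1\to N\cap V_i\to V_i\to NV_i/N\to 1$ and $1\to N\to NV_i\to NV_i/N\to 1$, combine with hypotheses $(i)$ and $(ii)$ to obtain $\dim_{\F_p}H_1(V_i,\F_p)\leq k_2[N:N\cap V_i]+k_1[G:NV_i]$, divide by $[G:V_i]=[G:NV_i]\cdot[NV_i:V_i]$, and conclude the limit is $0$. Your treatment is in fact more explicit than the paper's on the divergence of $[G:NV_i]$ and $[N:N\cap V_i]$ (the paper simply asserts both can be taken larger than $(k_1+k_2)/\epsilon$); your observation that only $N$ closed and $N\neq 1$ are really needed for these divergences, and that the hypothesis can be weakened to $\limsup>0$, are valid refinements.
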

	
	\begin{proof}
		Suppose by contradiction that  $[G:N]=\infty$. Let $\epsilon>0$ arbitrary. Then $|N|=[G:N]=\infty$    implies that  exists $i_0\in \N$ and  $V_{i_0}\unlhd_\circ G$ such that 
		\begin{align}\label{eq1.000}
			|G:NV_{i_0}|,|NV_{i_0}:V_{i_0}|>\dfrac{k_2+k_1}{\epsilon}
		\end{align}
		Let  $H_1(V_i,\F_p)\neq 0$ for $i\geq i_0$. By the  Five Term  Exact Sequence  (see  \cite[\S 7.2, Thm.~7.2.6]{ZR}) we have the followings exact sequences 
		\begin{align}\label{e20}
			H_1(N\cap V_i,\F_p)_{V_i/V_i\cap N}\xrightarrow{\alpha_1} H_1(V_i,\F_p)\xrightarrow{\alpha_2} H_1(V_i/V_i\cap N,\F_p)\rightarrow 0 
		\end{align}
	and 
	\begin{align}\label{e21}
		H_1(N,\F_p)_{NV_i/N}\xrightarrow{\alpha_1} H_1(NV_i,\F_p)\xrightarrow{\alpha_2} H_1(NV_i/N,\F_p)\rightarrow 0 
	\end{align}
By   (\ref{e20}) and (ii), we have  that 
		\begin{align}\label{eq21}
			\dim_{\F_p}(\ker(\alpha_2))\leq  \dim_{\F_p}H_1(N\cap V_i,\F_p)\leq k_2[N:N\cap V_i].
		\end{align} 
		Furthermore by  (\ref{e21}) and (i), 
			\begin{align}\label{eq20}
			\dim_{\F_p}H_1(V_i/V_i\cap N,\F_p)\leq \dim_{\F_p} H_1(NV_i,\F_p)\leq  k_1[G: NV_i].
		\end{align}
		It follows  by (\ref{e20}), (\ref{eq21})  and  (\ref{eq20}) 
		\begin{align}\label{eq40}
			\nonumber\dim_{\F_p}H_1(V_i,\F_p)&=\dim_{\F_p}\image(\alpha_2)+ \dim_{\F_p}\ker(\alpha_2)\\  \nonumber &=\dim_{\F_p}H_1(V_i/V_i\cap N,\F_p)+ \dim_{\F_p}\ker(\alpha_2) \\  
			&\leq \dfrac{k_1|G:V_i|}{|NV_i:V_i|}+ \dfrac{k_2|G:V_i|}{|G:NV_i|}
		\end{align}
		Hence by  (\ref{eq1.000}) and (\ref{eq40}),
		\begin{align}
			\nonumber\dfrac{	\dim_{\F_p}H_1(V_i,\F_p)}{|G:V_i|}&\leq \dfrac{k_1}{|NV_i:V_i|}+ \dfrac{k_2}{|G:NV_i|}\\  \nonumber
			&\leq\dfrac{k_1+k_2}{\min\{|G:NV_i|,|NV_i:V_i|\}}\\
			&<\epsilon
		\end{align}
		Therefore, 
		by  $i\geq i_0$ we have 
		\begin{align}\label{eq50}
			0\leq \dfrac{	\dim_{\F_p}H_1(V_i,\F_p)}{|G:V_i|}< \epsilon
		\end{align}
		Hence  by (\ref{eq50} )  $\displaystyle\lim_{i\rightarrow\infty}   \dfrac{	\dim_{\F_p}H_1(V_i,\F_p)}{|G:V_i|}=0$ which is a contradiction.

	\end{proof}
	As a consequence one concludes the Theorem B. 
	
	\begin{proof}[\textbf{Proof of Theorem B} ]
 We  havet that 	$\widehat{G}$ is  torsion free (see \cite[Thm.~A and Thm.~E ]{TZ}) and $\chi_p(\widehat{G})=\chi(G)<0$ (see  Theorem \ref{TZa}(b)).  Note that, exists   a  sequence $\{V_{i}\}_{i\geq 1}$ of open normal  subgroups    of ${G}$  such that   $V_{i+1}\leq V_i$ for all  $i\geq 1$ and  $\displaystyle\bigcap_{i\geq 1}{V_i}=1$, hence, by    Theorem~A(3) we have $\displaystyle\lim_{i\rightarrow\infty} \displaystyle{\dfrac{\dim_{\F_p}H_1(V_i,{\F_p})}{[G:V_i]}}=-\chi_p(G)>0$. Note that  $$\dim_{\F_p}H_1(NV_i,\F_p)\leq d(NV_i)\leq d(G)[G: NV_i]$$ and   $$\dim_{\F_p}H_1(N\cap V_i,\F_p)\leq d(N\cap V_i)\leq d(N)[N:N\cap V_i].$$ Therefore    the Proposition  \ref{p:4}  yields the claim. 
	\end{proof}                                                     
	In \cite{AD}, D.~Kochloukova and P.~Zalesskii
		answered the analogous of Bridson and  Howie's Theorem for pro-$p$ limit groups (see \cite[\S 6, Thm.~6.7]{AD}). The Proposition~\ref{p:4} gives us a different proof of this claim.             
		 \begin{cor}
			Let  $G$ be a  pro-p limit group  non-abelian  and  let   $N$ be a finitely generated  normal closed subgroup  of  $G$. Then  $[{G}:N]<\infty$.  
		\end{cor}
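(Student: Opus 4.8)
The plan is to deduce the statement from Proposition~\ref{p:4}, in exactly the way Theorem~B was deduced from it, with the profinite completion $\widehat{G}$ replaced throughout by the pro-$p$ limit group $G$ itself. So I would first check that $G$ satisfies the standing hypotheses of that proposition. A pro-$p$ limit group is a finitely generated pro-$p$ group of type $FP_\infty$ with $cd_p(G)<\infty$ (see \cite{AD}); since a pro-$p$ group of finite $p$-cohomological dimension is torsion free, $G$ is a torsion free profinite group. Being a finitely generated pro-$p$ group, $G$ admits a descending chain $\{V_i\}_{i\geq 1}$ of open normal subgroups with $V_{i+1}\leq V_i$ and $\bigcap_{i\geq 1}V_i=1$ --- for instance its lower $p$-central series --- so a sequence of the kind required by Proposition~\ref{p:4} is available, and $N$ is a closed normal subgroup of $G$ by hypothesis.

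Next I would verify conditions $(i)$ and $(ii)$. For a finitely generated pro-$p$ group one has $\dim_{\F_p}H_1(H,\F_p)=d(H)$ for every closed subgroup $H$ (Burnside basis theorem), and if $H\leq_o K$ with $K$ a finitely generated pro-$p$ group then the Nielsen--Schreier formula for free pro-$p$ groups, applied to a free pro-$p$ cover of $K$, gives $d(H)\leq 1+[K:H]\bigl(d(K)-1\bigr)\leq d(K)\,[K:H]$ (see \cite{ZR}). Since each $V_i$ is open in $G$, the subgroup $NV_i$ is open in $G$, so $\dim_{\F_p}H_1(NV_i,\F_p)\leq d(G)\,[G:NV_i]$, which is $(i)$ with $k_1=d(G)$. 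Likewise $N\cap V_i$ is open in $N$, and $N$ is finitely generated, so $\dim_{\F_p}H_1(N\cap V_i,\F_p)\leq d(N)\,[N:N\cap V_i]$, which is $(ii)$ with $k_2=d(N)<\infty$.

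It remains to produce the limit. Here I would invoke the pro-$p$ analogue of Theorem~A, part $(3)$, established by Kochloukova and Zalesskii in \cite{AC}: since $\bigcap_{i\geq 1}V_i=1$, one gets $\displaystyle\lim_{i\to\infty}\dim_{\F_p}H_1(V_i,\F_p)/[G:V_i]=-\chi_p(G)$. Because $G$ is a \emph{non-abelian} pro-$p$ limit group, its $p$-Euler characteristic satisfies $\chi_p(G)<0$ --- the pro-$p$ counterpart of Lemma~\ref{l4}, see \cite{AD} --- so this limit exists and equals $-\chi_p(G)>0$. Plugging all of this into Proposition~\ref{p:4} gives $[G:N]<\infty$, as desired.

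The step I expect to need the most care is not a new argument but the bookkeeping of the pro-$p$ inputs: one must be sure that torsion-freeness of $G$, the strict inequality $\chi_p(G)<0$ for a non-abelian pro-$p$ limit group, and the homological-growth statement $\dim_{\F_p}H_1(V_i,\F_p)/[G:V_i]\to-\chi_p(G)$ are genuinely available in the pro-$p$ setting --- they are the pro-$p$ analogues of precisely the facts used in the proof of Theorem~B, supplied by \cite{AD} and \cite{AC}. Once these are in hand the deduction is formally identical to that of Theorem~B, with Proposition~\ref{p:4} doing all the real work.
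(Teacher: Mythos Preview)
Your proposal is correct and follows essentially the same route as the paper: invoke Proposition~\ref{p:4} with the torsion-freeness of $G$, a descending chain of open normal subgroups with trivial intersection, the bounds $d(NV_i)\leq d(G)[G:NV_i]$ and $d(N\cap V_i)\leq d(N)[N:N\cap V_i]$, and the homological approximation $\lim_i\dim_{\F_p}H_1(V_i,\F_p)/[G:V_i]=-\chi_p(G)>0$ for non-abelian pro-$p$ limit groups from \cite{AC}. The only cosmetic difference is that the paper cites \cite{ABI} (rather than \cite{AD}) for the strict negativity of the Euler characteristic.
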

		\begin{proof}
			The  Euler characteristic $\chi(G)$ of $G$ is negative (see \cite[\S 3, Thm.~3.6]{ABI}) and $G$ is  torsion free  (see \cite[\S 3, Thm.~3.5(ii)]{AD}).  Note that, there exists   a  sequence $\{V_{i}\}_{i\geq 1}$ of open normal  subgroups    of ${G}$  such that   $V_{i+1}\leq V_i$ for all  $i\geq 1$ and  $\displaystyle\bigcap_{i\geq 1}{V_i}=1$. Since $\displaystyle\lim_{i\rightarrow\infty} \displaystyle{\dfrac{\dim_{\F_p}H_1(V_i,{\F_p})}{[G:V_i]}}=-\chi(G)>0$ (see  \cite[\S 5, Thm.~5.3.(iii)]{AC}),  Proposition~\ref{p:4}  yields the claim. 
		\end{proof}
	 
Let $G=F_2\star (\Z\times \Z)$,  where $F_2$  is a free group of  rank two. We have that $G$ is a non-abelian limit group, and  by Kurosh'~Theorem,  each abelian subgroup of $G$ has  rank less than or equal to 2. 
We calculate    the $p$-deficiency  of the profinite completion of a special kind of limit group.

\begin{thm}\label{thmD}
	Let  $G$ be a non-abelian limit group   such that each abelian subgroup has  rank less than or equal to 2 and let $p$ be a prime number. Then  $\de_p(\widehat{G})\geq 2$.
\end{thm}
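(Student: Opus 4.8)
The plan is to prove the sharper statement $\de_p(\widehat{G})=1-\chi(G)$ and then conclude from the fact that $G$ is a non-abelian limit group. Throughout, $\dim$ denotes length as a $\Z$-module.

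\emph{Step 1: $cd_p(\widehat{G})\le 2$.} Exactly as in the proof of Theorem~A, $\widehat{G}$ is the fundamental group of a finite profinite graph of groups whose edge groups are isomorphic to $\widehat{\Z}$ or trivial, and whose vertex groups are either $\widehat{\Z}^{\,m}$ with $m\le 2$ or profinite completions of limit groups of strictly smaller height --- each of the latter again having all abelian subgroups of rank $\le 2$. Since free profinite groups have $p$-cohomological dimension $\le 1$, an induction on $\het(G)$ together with the standard bound $cd_p(\widehat{G})\le\max\{\max_v cd_p(\widehat{G}_v),\ \max_e(cd_p(\widehat{G}_e)+1)\}\le\max\{2,2\}=2$ yields $cd_p(\widehat{G})\le 2$. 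Consequently $H^n(\widehat{G},M)=0$ for $n\ge 3$ and every finite $\Z_p[[\widehat{G}]]$-module $M$; since moreover $\widehat{G}$ is of homological type $FP_\infty$ over $\Z_p$ (Theorem~\ref{TZa}(a)), each $H^n(\widehat{G},M)$ has finite length and the alternating sum $\chi_p(\widehat{G},M):=\sum_{n\ge 0}(-1)^n\dim H^n(\widehat{G},M)$ is well defined. Hence $\overline{\chi_2}(\widehat{G},M)=-\chi_p(\widehat{G},M)/\dim M$ for every nonzero finite $M$, and so $\de_p(\widehat{G})=\inf_{0\ne M}\bigl(1-\chi_p(\widehat{G},M)/\dim M\bigr)$.

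\emph{Step 2: $\chi_p(\widehat{G},M)=\chi_p(\widehat{G})\cdot\dim M$ for every finite $M$.} The long exact cohomology sequence --- finite, by Step~1, with only finite-length terms --- shows that $M\mapsto\chi_p(\widehat{G},M)$ is additive on short exact sequences; as $\dim$ is additive as well, we may assume $M$ is a finite $\F_p[[\widehat{G}]]$-module. Let $W\unlhd_\circ\widehat{G}$ be the (open) kernel of the action on $M$, so that $M|_W\cong\F_p^{\,\dim M}$ as a trivial $W$-module; then $H^n(W,M|_W)\cong H^n(W,\F_p)^{\dim M}$, whence $\chi_p(W,M|_W)=\chi_p(W)\cdot\dim M$. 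On the other hand $\widehat{G}$, being of type $FP_\infty$ over $\F_p$ with $cd_p(\widehat{G})<\infty$, is of type $FP$ over $\F_p$, so the Euler characteristic is multiplicative over open subgroups: $\chi_p(W,M|_W)=[\widehat{G}:W]\,\chi_p(\widehat{G},M)$ and $\chi_p(W)=[\widehat{G}:W]\,\chi_p(\widehat{G})$. Comparing the two expressions for $\chi_p(W,M|_W)$ gives $\chi_p(\widehat{G},M)=\chi_p(\widehat{G})\cdot\dim M$. (Alternatively, since limit groups are good one has $H^n(\widehat{G},M)\cong H^n(G,M)$ for finite $M$, and one may instead invoke the classical identity $\chi(G,M)=\chi(G)\cdot\dim M$ for the discrete group $G$, which is of type $FP$ over $\Z$.)

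\emph{Step 3: conclusion.} By Step~2 the quantity $1-\chi_p(\widehat{G},M)/\dim M$ equals $1-\chi_p(\widehat{G})$ for every nonzero finite $M$, so $\de_p(\widehat{G})=1-\chi_p(\widehat{G})=1-\chi(G)$ by Theorem~\ref{TZa}(b). As $G$ is a non-abelian limit group, $\chi(G)<0$ by Lemma~\ref{l4}; moreover $\chi(G)\in\Z$, because a finite resolution of $\Z$ by finitely generated projective $\Z G$-modules (available since $G$ is of type $FP$ over $\Z$) computes $\chi(G)$ as an alternating sum of ranks of finitely generated abelian groups. Hence $\chi(G)\le-1$ and $\de_p(\widehat{G})=1-\chi(G)\ge 2$. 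The step requiring the most care is the coefficient formula in Step~2: one must verify that the classical package --- finite projective resolutions over the completed group algebra, restriction to open subgroups, Shapiro's lemma, and the resulting multiplicativity of the Euler characteristic --- carries over to the profinite setting with twisted finite coefficients, or else reduce cleanly to the discrete group via goodness of limit groups; Steps~1 and~3 are routine given the structure theory of limit groups and the results recalled above.
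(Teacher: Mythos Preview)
Your proof is correct and takes a genuinely different route from the paper's argument. The paper proceeds by a double induction (on height and on the size of the underlying graph), splitting into the amalgam and HNN cases and applying the Mayer--Vietoris sequence directly to $\overline{\chi_2}(\widehat{G},M)$; it then uses the inductive bounds $\de_p(\widehat{G}_1)\ge 2$ (non-abelian vertex), $\de_p(\widehat{G}_2)\ge 1$ (abelian vertex of rank $\le 2$), together with $\overline{\chi_2}(\widehat{C},M)=0$, to push the inequality $\overline{\chi_2}(\widehat{G},M)\ge 1$ through. Your approach instead (i) uses the rank hypothesis once and for all to force $cd_p(\widehat{G})\le 2$, so that $\overline{\chi_2}(\widehat{G},M)$ \emph{is} the full alternating sum $-\chi_p(\widehat{G},M)/\dim M$, and then (ii) invokes the coefficient formula $\chi_p(\widehat{G},M)=\chi_p(\widehat{G})\dim M$; this yields the sharper conclusion $\de_p(\widehat{G})=1-\chi(G)$, from which $\ge 2$ follows since $\chi(G)\in\Z_{<0}$. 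The trade-off is that the paper's argument is entirely self-contained from Mayer--Vietoris, whereas yours rests on the coefficient formula in Step~2. Your ``multiplicativity'' justification $\chi_p(W,M|_W)=[\widehat{G}:W]\,\chi_p(\widehat{G},M)$ is in fact equivalent to the coefficient formula itself, so that line is circular as stated; the clean way to close Step~2 is the alternative you mention --- goodness of limit groups gives $H^n(\widehat{G},M)\cong H^n(G,M)$ for finite $M$, and then Brown's identity $\chi(G,M)=\chi(G)\dim_{\F_p}M$ for discrete groups of type $FP$ over $\F_p$ applies directly. With that fix, your argument is complete and, by avoiding the case analysis, arguably more conceptual.
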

\begin{proof}
		We proceed by induction on $n=\het(G)$ (see \S\ref{ss:hgt}).
		If $n=0$, then $G$ is a finitely generated free pro-$p$ group satisfying $\de(G)\geq 2$, hence  by (\ref{eq001})
		\begin{equation}
		\label{eq:mt1}
		\de_p(\widehat{G})\geq \de(\widehat{G})\geq \de(G)\geq 2. 
		\end{equation} 
		and  the claim follows. 
		So assume that $G$ is a limit group of height $\het(G)=n\geq 1$, and that the claim holds
		for all limit groups of height less or equal to $n-1$. By  Proposition~\ref{prop:prin},
		$G$ is isomorphic to the fundamental group $\pi_1(\Upsilon,\Lambda,\caT)$
		of a graph of groups $\Upsilon$ based on a finite connected graph $\Lambda$ whose edge groups are either infinite cyclic or trivial, and whose vertex groups are either limit group of height at most $n-1$ or free abelian groups. 
		Applying induction on $s(\Lambda)=|V(\Lambda)|+|E(\Lambda)|$ it suffices to consider
		the following two cases:
		
		\begin{itemize}
			\item[(I)] $G=G_1 \star_{C} G_2$ and $G_i$ is either a limit group of height
			at most $n-1$ or abelian, and $C$ is either infinite cyclic or trivial, $i\in\{1,2\}$;
			\item[(II)] $G=\HNN_\phi(G_1,C,t)$ where $G_1$ is either a limit group of height
			at most $n-1$ or abelian, and $C$ is either infinite cyclic or trivial.
		\end{itemize}

		\noindent
		{\bf Case I:} Let  $G=G_1\star_CG_2$. We distinguish two cases. 
		
		\noindent
		{\bf (1)} $C\neq 1$. Then either $G_1$ or $G_2$ is non-abelian. Otherwise, one would conclude that 
		$\chi(G)= \chi(G_1)+\chi(G_2)-\chi(C)=0$ and $G$ must be abelian
		(see Lemma~\ref{l4}) which was excluded by hypothesis.  Since the profinite topology on $G$ is efficient (see  \cite[\S 3, Thm.~3.8]{gru} ), one has $\widehat{G}=\widehat{G}_1\amalg_{\widehat{C}}\widehat{G}_2$.   Hence without loss of generality we may assume that $G_1$ is non-abelian, and,
		by induction, $\de_p(\widehat{G}_1)\geq 2$. If $G_2$ is also non-abelian, then, 	by induction, $\de_p(\widehat{G}_2)\geq 2$. Otherwise, if $G_2$ is abelian, then, by hypothesis we have    
		\begin{align}
			\de_p(\widehat{G}_2)\geq \de(\widehat{G}_2)\geq \de(G_2)\geq  1.
		\end{align}
		Let  $M$ a finite $\Z_p[[\widehat{G}]]$-module,  the Mayer-Vietoris sequence associated to
		$H^\bullet(\argu,M)$  gives  
		\begin{equation}
		\label{eq:baslem4}
		\xymatrix{
			0 \ar[r]	& H^0(\widehat{G},M)\ar[r]& 
			H^0(\widehat{G}_1,M)\oplus  H^0(\widehat{G}_2,M)\ar[r]&
			H^0(\widehat{C},M)\ar[d]^{\gamma_1}\\
			&H^1(\widehat{C},M)\ar[d]^{\gamma_2}& 
			H^1(\widehat{G}_1,M)\oplus H^1(\widehat{G}_2,M)\ar[l]&
			H^1(\widehat{G},M)\ar[l]\\
			& 	H^2(\widehat{G},M)\ar[r]&H^2(\widehat{G}_1,M)\oplus H^2(\widehat{G}_2,M)\ar[r]&H^2(\widehat{C},M)
		} 
		\end{equation} 
		Since  $H^2(\widehat{C},M)=0$, by  (\ref{eq:baslem4}) counting dimension we have
  		\begin{align}\label{eq101}
			-\dim H^2(\widehat{G},M)=&-\dim H^2(\widehat{G}_1,M)-\dim H^2(\widehat{G}_2,M)+	\dim H^2(\widehat{C},M)\\ &\nonumber -\dim H^1(\widehat{G},M) +
			\dim H^1(\widehat{G}_1,M)+\dim H^1(\widehat{G}_2,M)\\ &\nonumber -\dim H^1(\widehat{C},M)+\dim H^0(\widehat{G},M)-\dim H^0(\widehat{G}_1,M)\\ &\nonumber -\dim H^0(\widehat{G}_2,M)+\dim H^0(\widehat{C},M).
		\end{align}	
		As  $M$ is  a finite $\Z_p[[\widehat{G}]]$-module, then  $M$ is  a finite  $\Z_p[[\widehat{G}_i]]$-module for $i\in\{1,2\}$, and  $M$ is a  finite $\Z_p[[\widehat{C}]]$-module. Then, by  (\ref{eq01}) and (\ref{eq101}) we have 
		\begin{align}\label{et1}
				\overline{\chi_2}(\widehat{G},M)=	\overline{\chi_2}(\widehat{G}_1,M)+	\overline{\chi_2}(\widehat{G}_2,M)-	\overline{\chi_2}(\widehat{C},M).
		\end{align}
		Note that  
		\begin{align}\label{et2}
				\overline{\chi_2}(\widehat{G}_1,M)\geq \de_p(\widehat{G}_1)-1\geq 1,\,\,\,\,\, \text{and}\,\,\,\,\, 	\overline{\chi_2}(\widehat{G}_2,M)\geq \de_p(\widehat{G}_2)-1\geq 0.
		\end{align} 
		Furthermore, 
		\begin{align}\label{et3}
				\overline{\chi_2}(\widehat{C},M)=d(\widehat{C})-1=0\,\,\text{(see  \cite[\S 2, Example~2.6]{G})}.
		\end{align} Therefore, by (\ref{et1}), (\ref{et2}) and (\ref{et3})  we have  
		$\overline{\chi_2}(\widehat{G},M)\geq 1$, hence $\de_p(\widehat{G})\geq 2$.\\
		\noindent
		\\{\bf (2)}
		$C=1$. Then, by  (\ref{eq01}) and (\ref{eq101}),  
		\begin{equation}\label{e003}
			\overline{\chi_2}(\widehat{G},M)=	\overline{\chi_2}(\widehat{G}_1,M)+	\overline{\chi_2}(\widehat{G}_2,M)+1.
		\end{equation}
		If both $G_1$ and $G_2$   are abelian  then $\de(G_i)\geq 1$  for $i\in \{1,2\}$. Hence, we have 
		\begin{align}\label{ea1}
			1+	\overline{\chi_2}(\widehat{G}_i,M)\geq \de_p(\widehat{G}_i)\geq \de(\widehat{G}_i)\geq 1, \,\,\, \text{for}\,\, i\in \{1,2\}.
		\end{align}
		So, by (\ref{e003}) and  (\ref{ea1}) we have $	\overline{\chi_2}(\widehat{G},M)\geq 1$, hence $\de_p(\widehat{G})\geq 2$.  Then, without loss of generality we may assume that $G_1$ is non-abelian, and,
		by induction, $\de_p(\widehat{G}_1)\geq 2$, furthermore  $\de_p(\widehat{G}_2)\geq 1$.  It follows that  
		\begin{align}\label{et4}
				\overline{\chi_2}(\widehat{G}_1,M)\geq \de_p(\widehat{G}_1)-1\geq 1\,\,\,\,\,\, \text{and}\,\,\,\, 	\overline{\chi_2}(\widehat{G}_2,M)\geq \de_p(\widehat{G}_2)-1\geq 0.
		\end{align}
		Then, by (\ref{e003}) and (\ref{et4}) we have  $	\overline{\chi_2}(\widehat{G},M)\geq 2$, hence  $\de_p(\widehat{G})\geq 3$.
		
		\noindent\\
		{\bf Case II:} Let  $G=\HNN_\phi(G_1,C,t)=
		\langle\, G_1,t\mid\,t\,c\,t^{-1}=\phi(c)\,\rangle$ be an HNN-extension
		with   $C=\langle c \rangle$. If $C=1$, then $G=G_1\star  <t>$ is isomorphic to a free product.
		Hence the claim follows already from Case I. So we may assume that $C\not=1$.
		Note that $G_1$ must be non-abelian.
		Otherwise, one has $\chi(G)=\chi(G_1)-\chi(C)=0$, 
		and $G$ must be abelian (see  Lemma~\ref{l4}), a contradiction.	Since the profinite topology on $G$ is efficient (see  \cite[\S 3, Thm.~3.8]{gru} ), then  $\widehat{G}=~\HNN(\widehat{G}_1,\widehat{C},t)$. Hence, as  $G_1$ is non-abelian,   induction implies that $~\de_p(\widehat{G}_1)\geq2$. 
		
		Let  $M$ a finite $\Z_p[[\widehat{G}]]$-module,  the Mayer-Vietoris sequence associated to
		$H^\bullet(\argu,M)$  gives 
		\begin{equation}
		\label{eq:baslem44}
		\xymatrix{
			0 \ar[r]	& H^0(\widehat{G},M)\ar[r]& 
			H^0(\widehat{G}_1,M)\ar[r]&
			H^0(\widehat{C},M)\ar[d]^{\gamma_1}\\
			&H^1(\widehat{C},M)\ar[d]^{\gamma_2}& 
			H^1(\widehat{G}_1,M)\ar[l]&
			H^1(\widehat{G},M)\ar[l]\\
			& 	H^2(\widehat{G},M)\ar[r]&H^2(\widehat{G}_1,M)\ar[r]&H^2(\widehat{C},M)
		} 
		\end{equation} 
		Since  $H^2(\widehat{C},M)=0$, by  (\ref{eq:baslem44}) counting dimension we have   
		\begin{align}\label{eq1011}
				-\dim H^2(\widehat{G},M)=&-\dim H^2(\widehat{G}_1,M)+\dim H^2(\widehat{C},M) -\dim H^1(\widehat{G},M) \\ &\nonumber +
			\dim H^1(\widehat{G}_1,M) -\dim H^1(\widehat{C},M)+\dim H^0(\widehat{G},M) \\ &\nonumber -\dim H^0(\widehat{G}_1,M)+\dim H^0(\widehat{C},M).
		\end{align}	
		As  $M$ is  a finite $\Z_p[[\widehat{G}]]$-module, then  $M$ is  a finite  $\Z_p[[\widehat{G}_i]]$-module for $i\in\{1,2\}$, and  $M$ is a  finite $\Z_p[[\widehat{C}]]$-module. Then, by  (\ref{eq01}) and  (\ref{eq1011}) we have 
		\begin{align}\label{et11}
				\overline{\chi_2}(\widehat{G},M)=	\overline{\chi_2}(\widehat{G}_1,M)-	\overline{\chi_2}(\widehat{C},M).
		\end{align}
		Note that  
		\begin{align}\label{et22}
				\overline{\chi_2}(\widehat{G}_1,M)\geq \de_p(\widehat{G}_1)-1\geq 1,
		\end{align} 
		Therefore, by (\ref{et3}), (\ref{et11}) and (\ref{et22})   we have  
		$	\overline{\chi_2}(\widehat{G},M)\geq 1$, hence $\de_p(\widehat{G})\geq 2$.
		
	\end{proof}
	
	As a consequence one concludes the following.
	
	\begin{cor}\label{pr6.1}
		Let  $G$ be a non-abelian limit group   such that each abelian subgroup has  rank less than or equal to 2 and  let $N$ be   a  normal subgroup of $\widehat{G}$ such that $0<\dim_{\F_p}H^1(N,\F_p)<\infty$, for every prime number $p$ that divide $|N|$. Then  $~|\widehat{G}:N|<\infty$.
	\end{cor}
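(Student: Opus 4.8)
The plan is to derive the corollary as a short consequence of Theorem~\ref{thmD} combined with Proposition~\ref{pp1}. First I would dispose of the trivialities: $\widehat G$ is topologically finitely generated because $G$ is finitely generated, and we may assume $N\neq 1$, so that $|N|$ has a prime divisor; fix a prime $p$ dividing $|N|$. By hypothesis $0<\dim_{\F_p}H^1(N,\F_p)<\infty$, and since with $\F_p$-coefficients one has $\dim_{\F_p}H^1(N,\F_p)=\dim_{\F_p}H_1(N,\F_p)$ (by duality, the same identification already used for the limits in Theorem~\ref{TZa}(c)), this gives $0<\dim_{\F_p}H_1(N,\F_p)<\infty$; in particular $H_1(N,\F_p)\neq 0$.

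Next I would bound the relative $p$-deficiency from below. Every finite $\Z_p[[\widehat G]]$-module on which $N$ acts trivially is in particular a finite $\Z_p[[\widehat G]]$-module, i.e.\ $\mathcal{M}_p(N)\subseteq\mathcal{M}_p(\{1\})$, so the infimum defining the $p$-deficiency can only grow when restricted to the smaller class: $\de_p(\widehat G,N)\geq\de_p(\widehat G,\{1\})=\de_p(\widehat G)$. Since $G$ is a non-abelian limit group all of whose abelian subgroups have rank at most $2$, Theorem~\ref{thmD} gives $\de_p(\widehat G)\geq 2$, whence $\de_p(\widehat G,N)\geq 2$.

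Finally I would conclude by contradiction. Assume $[\widehat G:N]=\infty$. Then $N$ is a normal subgroup of infinite index in the finitely generated profinite group $\widehat G$ with $H_1(N,\F_p)\neq 0$ and $\de_p(\widehat G,N)\geq 2$; hence Proposition~\ref{pp1} forces $\dim_{\F_p}H_1(N,\F_p)$ to be infinite, contradicting the finiteness obtained above. Therefore $[\widehat G:N]<\infty$.

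I do not anticipate a real obstacle: the statement is essentially the combination of the $p$-deficiency estimate of Theorem~\ref{thmD} with the ``finite $H^1$ forces finite index'' dichotomy of Proposition~\ref{pp1}. The only two points deserving a word of care are the monotonicity $\de_p(\widehat G,N)\geq\de_p(\widehat G)$, which is immediate from $\mathcal{M}_p(N)\subseteq\mathcal{M}_p(\{1\})$, and the numerical identity $\dim_{\F_p}H^1=\dim_{\F_p}H_1$ for $\F_p$-coefficients; alternatively the whole argument can be phrased in terms of $H^1$ throughout.
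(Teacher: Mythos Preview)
Your proposal is correct and follows essentially the same route as the paper's proof: pick a prime $p$ dividing $|N|$, invoke Theorem~\ref{thmD} to get $\de_p(\widehat G)\geq 2$, and then apply Proposition~\ref{pp1} to reach a contradiction with the finiteness of $H^1(N,\F_p)$. In fact you are more explicit than the paper on two points it leaves implicit, namely the monotonicity $\de_p(\widehat G,N)\geq\de_p(\widehat G)$ (which is what Proposition~\ref{pp1} actually requires) and the identification $\dim_{\F_p}H^1(N,\F_p)=\dim_{\F_p}H_1(N,\F_p)$.
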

	
	\begin{proof} 
		Let $p$ be a prime number such that $p$ divides $|N|$. Then, from Theorem~\ref{thmD} we have $\de_p(\widehat{G})\geq 2$. Suppose by contradiction that   $|\widehat{G}:N|=\infty$, hence       $H^1(N,\F_p)$ is  infinite  (see Proposition \ref{pp1}) which is a contradiction. Therefore  $|\widehat{G}:N|<\infty$. 
	\end{proof}
	
\section{Homological approximations for a pro-$p$ limit group}
\begin{example}
The pro-p group $G=F\amalg_C F$, where  $F=F(x,y)$ is a free  pro-p group of rank 2 and $C$ is a self-centralized procyclic subgroup of $F$ generated by $x^p[x,y]$, is a pro-limit group whose abelianization has torsion. Indeed, the group  $F\amalg_C F$ is immersed in  $F\amalg_C A$, where  $A\simeq \Z_p^2$ with $A/C\simeq \langle a \rangle$, because  $F\amalg_C F\simeq F\amalg_C aFa^{-1}$  is a subgroup of $F\amalg_C A$  generated by $F$ and  $aFa^{-1}$. Thus,  $F\amalg_C F$ is a pro-p limit group. Furthermore  $G^{\ab}$  has an element of order $p$, because  $G^{\ab}=\langle x,y,z,w\,|\,(xz^{-1})^p=1,[x,y]=[x,z]=[x,w]=[y,z]=[y,w]=[z,w] \rangle$. 
	\end{example}

   A  non-trivial pro-p limit group   has infinite abelianization  (see \cite[\S 4, Cor. 4.5]{AD})  and from the previous example, it makes sense to study the \textit{p-rational rank } of a pro-p limit group. 
	
    For an finitely generated pro-p group $G$ the {\em p-rational rank} of $G$ is given by 
	\begin{equation}
	\rk_{\Q_p}(G):=\dim_{\Q_p}(\Q_p\otimes_{\Z_p}{G^{\ab})}=\dim_{\Q_p}\big(\Q_p\otimes_{\Z_p}{H_1(G,\Z_p)\big)}
	\end{equation}
	In particular, $\rk_{\Q_p}(G)\leq d(G^{\ab})\leq d(G)$. 
\begin{Lemma}\label{l1}
		Let  $G$ be  a pro-$p$ group of type $FP_\infty$ over  $\Z_p$. Then  
		\begin{align}
			\dim_{\Q_p}(\Q_p\otimes_{\Z_p}H_{j}(G,\Z_p)\big)\leq dim_{\F_p}H_j(G,\F_p)\,\,\,\text{ for all}\,\, j\geq0. 
		\end{align}
	\end{Lemma}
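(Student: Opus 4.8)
The plan is to reduce the statement to a fact about a bounded-below complex of finitely generated free $\Z_p$-modules and then combine the universal coefficient theorem with the structure theorem for finitely generated $\Z_p$-modules. First I would fix, using the $FP_\infty$ hypothesis, a resolution $P_\bullet\to\Z_p$ of the trivial module by finitely generated free $\Z_p[[G]]$-modules, and set $C_\bullet:=P_\bullet\,\widehat\otimes_{\Z_p[[G]]}\Z_p$. Since each $P_n$ is finitely generated free, $C_\bullet$ is a complex of finitely generated free $\Z_p$-modules with $H_j(C_\bullet)=H_j(G,\Z_p)$. Reducing mod $p$, the complex $P_\bullet\,\widehat\otimes_{\Z_p}\F_p$ is a free resolution of $\F_p$ over $\F_p[[G]]=\Z_p[[G]]\,\widehat\otimes_{\Z_p}\F_p$, so $H_j(G,\F_p)=H_j(C_\bullet\otimes_{\Z_p}\F_p)$, where the tensor product may be taken in the ordinary algebraic sense because the terms of $C_\bullet$ are finitely generated.

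Next I would apply the classical universal coefficient theorem to the complex $C_\bullet$ of (flat, finitely generated) $\Z_p$-modules: for every $j\geq 0$ there is a short exact sequence
\begin{equation*}
0\longrightarrow H_j(C_\bullet)\otimes_{\Z_p}\F_p\longrightarrow H_j(C_\bullet\otimes_{\Z_p}\F_p)\longrightarrow \Tor_1^{\Z_p}\big(H_{j-1}(C_\bullet),\F_p\big)\longrightarrow 0,
\end{equation*}
which immediately gives $\dim_{\F_p}H_j(G,\F_p)\geq \dim_{\F_p}\big(H_j(G,\Z_p)\otimes_{\Z_p}\F_p\big)$.

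Finally, since $C_\bullet$ consists of finitely generated $\Z_p$-modules, $H_j(G,\Z_p)$ is a finitely generated $\Z_p$-module, hence $H_j(G,\Z_p)\cong \Z_p^{r_j}\oplus T_j$ with $T_j$ finite. Then $\dim_{\Q_p}\big(\Q_p\otimes_{\Z_p}H_j(G,\Z_p)\big)=r_j$, while $\dim_{\F_p}\big(H_j(G,\Z_p)\otimes_{\Z_p}\F_p\big)=r_j+\dim_{\F_p}(T_j/pT_j)\geq r_j$. Chaining the two inequalities yields $\dim_{\Q_p}\big(\Q_p\otimes_{\Z_p}H_j(G,\Z_p)\big)\leq \dim_{\F_p}H_j(G,\F_p)$, as desired.

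There is no serious obstacle here; the only point requiring care — rather than a genuine difficulty — is justifying the passage between completed and ordinary tensor products in the profinite setting. This is legitimate precisely because the $FP_\infty$ assumption lets us work throughout with finitely generated free modules, and for finitely generated modules over $\Z_p$ (or $\Z_p[[G]]$) the completed tensor product agrees with the algebraic one, so the classical universal coefficient theorem applies verbatim to $C_\bullet$.
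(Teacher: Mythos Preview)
Your argument is correct and is essentially the same as the paper's: both obtain the injection $H_j(G,\Z_p)/pH_j(G,\Z_p)\hookrightarrow H_j(G,\F_p)$ and then invoke the structure theorem for finitely generated $\Z_p$-modules. The only cosmetic difference is that the paper reads this injection off the long exact sequence associated to $0\to\Z_p\xrightarrow{p}\Z_p\to\F_p\to 0$, whereas you package the same map as the first term of the universal coefficient short exact sequence for the complex $C_\bullet$.
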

	\begin{proof}
		As  $G$ is of type $FP_{\infty}$ over $\Z_p$, then  $\dim_{\F_p}{H_j(G,\F_p)}$ is finite  and  $H_j(G,\Z_p)$ is a   finitely generated abelian pro-$p$  group,  for all $j\geq 0$.
		
		Consider the following exact short sequence 
		\begin{gather}\label{et100}
			0\rightarrow \Z_p\xrightarrow{p^*}\Z_p\rightarrow\F_p\rightarrow0,
			\intertext {where the map  $\Z_p\xrightarrow{p^*}\Z_p$ is the multiplication by $p$. Hence, by (\ref{et100}) we have the following long exact  sequence} \label{et101}\cdots\rightarrow H_j(G,\Z_p)\xrightarrow{p^*} H_j(G,\Z_p) \xrightarrow{\phi} H_j(G,\F_p)\rightarrow\cdots, \intertext {where the map}H_1(G,\Z_p)\xrightarrow{p^*} H_1(G,\Z_p)\,\,\,\,\, \text{is the multiplication by $p$.}\nonumber
		\end{gather}
		By   \ref{et101}  we have
		\begin{align}\label{et103}	
			d\big(\frac{H_j(G,\Z_p)}{pH_j(G,\Z_p)}\big)\leq \dim_{\F_p}{H_j(G,\F_p)}. 
		\end{align}
		Note that 	
		\begin{align}
			\label{eq31}
			d\big(\dfrac{H_{j}(G,\Z_p)}{Tor(H_{j}(G,\Z_p))}\big)\leq d\big(\dfrac{H_j(G,\Z_p)}{pH_j(G,\Z_p)}\big)\intertext{where $Tor(H_{j}(G,\Z_p))$ is the torsion group of $H_{j}(G,\Z_p) $. As} 		
			\dim_{\Q_p}(\Q_p\otimes_{\Z_p}H_{j}(G,\Z_p)\big)=d\big(\frac{H_{j}(G,\Z_p)}{Tor(H_{j}(G,\Z_p))}\big),		
		\end{align}
		then by (\ref{et103}) and (\ref{eq31})   
		we get the result.
		
	\end{proof}
	Now,  we show the analogous of D.~Koch\-loukova and P.~Zalesskii's Theorem (see \cite[\S 5, Thm. 5.3]{AC})     for the field  $\Q_p$.  
	\begin{proof}[\textbf{Proof of Theorem C} ]
 \noindent
				\\ \item[\textbf{(1).}] As  $G$ is of  type  $FP_\infty$ over $\Z_p$ (see  \cite[\S 4, Cor. 4.4]{AC}), then     $U_i$ is of type  $FP_\infty$ over $\Z_p$,  for all $i\geq 1$. By Lemma \ref{l1}, we have for $j\geq 3$
			\begin{align}\label{et107}	
				\dim_{\Q_p}(\Q_p\otimes_{\Z_p}H_{j}(U_i,\Z_p)\big)/[G:U_i]\leq \dim_{\F_p}H_j(U_i,\F_p)/[G:U_i]. 
			\end{align}
			Now, by Theorem  5.3(i) in \cite{AC}  we have for $j\geq 3$
			\begin{align}\label{et108}
				\displaystyle\lim_{\overrightarrow{i}}{\dim_{\F_p}H_j(U_i,\F_p)/[G:U_i]} =0.
			\end{align}
			It follows  by (\ref{et107}) and (\ref{et108})
			$$\displaystyle\lim_{\overrightarrow{i}}{\dim_{\Q_p}(\Q_p\otimes_{\Z_p}H_{j}(U_i,\Z_p)\big)/[G:U_i]} =0$$ for  $j\geq 3$.
			
			\item[\textbf{(2).}] As $G$ is of type $FP_{\infty}$ over $\Z_p$, then  the p-characteristic of Euler is equivalent to the following expression 
			\begin{gather}
				\nonumber\chi_p(G)=\displaystyle\sum_{0\leq j\leq cd_p(G)}{(-1)^j \dim_{\F_p}{H_j(G,\F_p)}}\\\label{et109}=
				\displaystyle\sum_{0\leq j\leq cd_p(G)}{(-1)^j \dim_{\Q_p}{\big(\Q_p\otimes_{\Z_p}H_j(G,\Z_p)\big)}}
				\\\intertext{Since   $\chi_p(G)=\chi_p(U_i)/[G:U_i]$,  then by (\ref{et109})  we have  for all  $i\geq 1$}
				\label{et110}\chi(G)=\displaystyle\sum_{0\leq j\leq cd_p(G)}{(-1)^j \dim_{\Q_p}{\big(\Q_p\otimes_{\Z_p}H_j(U_i,\Z_p)\big)/[G:U_i]}}
			\end{gather}
			We define  a map   $L(U):=\dim_{\Q_p}{(\Q_p\otimes_{\Z_p}H_1(U,\Z_p))}-\dim_{\Q_p} (\Q_p\otimes_{\Z_p}H_2(U,\Z_p))$,  where $U\displaystyle\leq_o{G}$. Hence, by (\ref{et110}) we have for $i\geq 1$  
			\begin{align}\label{et111}	
				\chi(G)=\displaystyle\sum_{3\leq j\leq cd_p(G)}{(-1)^j \dim_{\Q_p}{\big(\Q_p\otimes_{\Z_p}H_j(U_i,\Z_p)\big)/[G:U_i]}}-L(U_i)/[G:U_i]+1/[G:U_i]
			\end{align} 
			It follows by (\ref{et111}) that  
			\begin{align}\label{et112}	
				\chi(G)=\displaystyle\sum_{3\leq j\leq cd_p(G)}{(-1)^j \displaystyle\lim_{\overrightarrow{i}}\dim_{\Q_p}{\big(\Q_p\otimes_{\Z_p}H_j(U_i,\Z_p)\big)/[G:U_i]}}-\displaystyle\lim_{\overrightarrow{i}}L(U_i)/[G:U_i]+\displaystyle\lim_{\overrightarrow{i}}1/[G:U_i]
			\end{align} 
			Therefore, by (1)  and
			$$-\chi(G)=\displaystyle\lim_{\overrightarrow{i}}{L(U_i)/[G:U_i]}$$
			
			\item[\textbf{(3).}]  By  Lemma \ref{l1}, we have   
			\begin{align}\label{et113}
				\dim_{\Q_p}(\Q_p\otimes_{\Z_p}H_{2}(U_i,\Z_p)\big)/[G:U_i]\leq dim_{\F_p}H_2(U_i,\F_p)/[G:U_i]
			\end{align}
			Now, by Theorem  5.3(ii) in \cite{AC} we have for $j\geq 3$
			\begin{align}\label{et114}
				\displaystyle\lim_{\overrightarrow{i}}{\dim_{\F_p}H_2(U_i,\F_p)/[G:U_i]} =0.
			\end{align}
			It follows  by (\ref{et113}) and (\ref{et114}) 
			\begin{align}\label{ec1}
				\displaystyle\lim_{\overrightarrow{i}}{\dim_{\Q_p}(\Q_p\otimes_{\Z_p}H_{2}(U_i,\Z_p)\big)/[G:U_i]} =0
			\end{align}
			Note that 
			\begin{align}
				\displaystyle\lim_{\overrightarrow{i}}{\dim_{\Q_p}{\big(\Q_p\otimes_{\Z_p}H_1(U_i,\Z_p)\big)}/[G:U_i]}=\lim_{\overrightarrow{i}}{L(U_i)/[G:U_i]} +\nonumber\\ \displaystyle\lim_{\overrightarrow{i}}{\dim_{\Q_p}(\Q_p\otimes_{\Z_p}H_{2}(U_i,\Z_p)\big)/[G:U_i]}
			\end{align}
			Hence, by (2) and  (\ref{ec1}) we have  
			$$\displaystyle\lim_{\overrightarrow{i}}{\dim_{\Q_p}{\big(\Q_p\otimes_{\Z_p}H_1(U_i,\Z_p)\big)}/[G:U_i]}=-\chi_p(G)$$.
		
	\end{proof}
	As consequence  we have the  Corollary D. 
	\begin{proof}[\textbf{Proof of Corollary D}]
		Note that 
		\begin{align}\label{et115}
			d(U_i)\geq d(U_i^{\ab})=\rk_{\Q_p}(U_i)+T(U_i) 
		\end{align}
		Now, from Theorem  5.3(iii) in \cite{AC} we have 
		\begin{align}\label{et116}
			\displaystyle\lim_{\overrightarrow{i}}{\dim_{\F_p}H_1(U_i,\F_p)/[G:U_i]} =-\chi_p(G).
		\end{align}
		Furthermore, by Theorem  C(3) we have 
		\begin{align}\label{et117}
			\displaystyle\lim_{\overrightarrow{i}}{{\rk_{\Q_p}(U_i)}/[G:U_i]}=-\chi_p(G) 
		\end{align}
		Hence, by (\ref{et115}), (\ref{et116}) and (\ref{et117}) we have 
		\begin{align}
			\displaystyle\lim_{\overrightarrow{i}}{T(U_i)}/[G:U_i]=0.
		\end{align}
	\end{proof}
The following Lemma is the  pro-p version  of  Lemma 3.1 in \cite{jh}. 
\begin{Lemma}
	\label{l2}
	Let $G_1$ and  $G_2$ be  finitely generated pro-p groups, and let $C=\langle c \rangle$ be an  procyclic group isomorphic to $\Z_p$ or the trivial group.
	\begin{itemize}
		\item[(a)] If $G=G_1\amalg_{C}G_2$ is a free pro-p  product with amalgamation in $C$, then 
		\begin{equation}
		\label{eq:baslem1}
		\rk_{\Q_p}(G)=\rk_{\Q_p}(G_1)+\rk_{\Q_p}(G_2)-\rho(G),
		\end{equation}
		where $\rho(G) \in \{0,1\}$. Moreover, if $C=1$, then $\rho(G)=0$. 
		\item[(b)] If $G=\HNN_\phi(G_1,C,t)=\langle\, G_1,t\mid t\,c\, t^{-1}=\phi(c)\,\rangle$ is an pro-p HNN-extension with 
		equalization in $C\subseteq G_1$, then 
		\begin{equation}
		\label{eq:baslem2}
		\rk_{\Q_p}(G)=\rk_{\Q_p}(G_1)+\rho(G),
		\end{equation} 
		where $\rho(G)\in\{0,1\}$. One has an exact sequence 
		\begin{equation}
		\label{eq:baslem3}
		\xymatrix{
			C\otimes_{\Z_p}\Q_p\ar[r]^-{\alpha}& 
			G_1^{\ab}\otimes_{\Z_p}\Q_p \ar[r]^-{\beta}&
			G^{\ab}\otimes_{\Z_p}\Q_p  \ar[r]&
			\Q_p\ar[r]&0,
		}
		\end{equation}
		Moreover, 
		\begin{itemize}
			\item[(1)] $\rho(G)=0$ if, and only if, $\alpha$ is injective;
			\item[(2)] $\rho(G)=1$ if, and only if, $\alpha$ is the $0$-map. 
		\end{itemize}
	\end{itemize}
\end{Lemma}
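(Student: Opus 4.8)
The plan is to read off both identities from the Mayer--Vietoris long exact sequence in continuous homology with $\Z_p$-coefficients attached to the pro-$p$ decomposition of $G$, applying throughout the exact functor $\Q_p\otimes_{\Z_p}\argu$ (recall $\Q_p$ is flat over $\Z_p$) together with the identifications $\rk_{\Q_p}(H)=\dim_{\Q_p}\big(\Q_p\otimes_{\Z_p}H_1(H,\Z_p)\big)$ and $H_1(H,\Z_p)\cong H^{\ab}$. Since $C$ is procyclic, $H_1(C,\Z_p)\cong C$, so $\dim_{\Q_p}\big(\Q_p\otimes_{\Z_p}H_1(C,\Z_p)\big)\le 1$, with equality exactly when $C\cong\Z_p$; and since $G_1,G_2$ are finitely generated all the ranks that occur are finite, so the argument ultimately reduces to counting dimensions of finite-dimensional $\Q_p$-spaces.

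For part (a), with $G=G_1\amalg_C G_2$, I would begin from the long exact sequence
\begin{align*}
\cdots\to H_n(C,\Z_p)\to H_n(G_1,\Z_p)\oplus H_n(G_2,\Z_p)\to H_n(G,\Z_p)\to H_{n-1}(C,\Z_p)\to\cdots
\end{align*}
associated with the action of $G$ on its standard pro-$p$ tree (a homological analogue of the cohomological Mayer--Vietoris sequence (\ref{eq:baslem4}) used earlier). Tensoring with $\Q_p$ preserves exactness; in degree $0$ every pro-$p$ group $H$ satisfies $H_0(H,\Z_p)=\Z_p$, and the induced map $\Q_p\to\Q_p\oplus\Q_p$ is the antidiagonal, hence injective, so the boundary map $\Q_p\otimes_{\Z_p}H_1(G,\Z_p)\to\Q_p\otimes_{\Z_p}H_0(C,\Z_p)$ vanishes. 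There remains a right-exact sequence
\begin{align*}
\Q_p\otimes_{\Z_p}H_1(C,\Z_p)\xrightarrow{\psi}\big(\Q_p\otimes_{\Z_p}H_1(G_1,\Z_p)\big)\oplus\big(\Q_p\otimes_{\Z_p}H_1(G_2,\Z_p)\big)\to\Q_p\otimes_{\Z_p}H_1(G,\Z_p)\to 0 ,
\end{align*}
and counting dimensions gives $\rk_{\Q_p}(G)=\rk_{\Q_p}(G_1)+\rk_{\Q_p}(G_2)-\rho(G)$ with $\rho(G):=\dim_{\Q_p}(\image\psi)$. Then $\rho(G)\in\{0,1\}$ because $\dim_{\Q_p}\big(\Q_p\otimes_{\Z_p}H_1(C,\Z_p)\big)\le 1$, and $\rho(G)=0$ whenever $C=1$ since the source of $\psi$ is then trivial.

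For part (b), with $G=\HNN_\phi(G_1,C,t)$, the relevant sequence is
\begin{align*}
\cdots\to H_n(C,\Z_p)\xrightarrow{\theta_n}H_n(G_1,\Z_p)\to H_n(G,\Z_p)\to H_{n-1}(C,\Z_p)\to\cdots ,
\end{align*}
where $\theta_n$ is the difference of the maps induced by the inclusion $C\subseteq G_1$ and by $\phi$ followed by inclusion. In degree $0$, $\theta_0\colon\Q_p\to\Q_p$ is the difference of two identities, hence zero, so $\Q_p\otimes_{\Z_p}H_0(G_1,\Z_p)\to\Q_p\otimes_{\Z_p}H_0(G,\Z_p)$ is an isomorphism and the boundary map $\Q_p\otimes_{\Z_p}H_1(G,\Z_p)\to\Q_p\otimes_{\Z_p}H_0(C,\Z_p)=\Q_p$ is onto; truncating there yields precisely the exact sequence (\ref{eq:baslem3}), with $\alpha=\Q_p\otimes\theta_1$ (using $H_1(C,\Z_p)\cong C$ and $H_1(G_1,\Z_p)\cong G_1^{\ab}$) and $\beta$ induced by $G_1\hookrightarrow G$. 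Counting dimensions gives $\rk_{\Q_p}(G)=\rk_{\Q_p}(G_1)+1-\dim_{\Q_p}(\image\alpha)$, so $\rho(G):=1-\dim_{\Q_p}(\image\alpha)$ lies in $\{0,1\}$; when $C\cong\Z_p$ the source of $\alpha$ is one-dimensional, whence $\rho(G)=0$ iff $\alpha$ is injective and $\rho(G)=1$ iff $\alpha$ is the zero map, while the case $C=1$ (where $G=G_1\star\langle t\rangle$ is a free pro-$p$ product and $\rho(G)=1$) is already covered by part (a).

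The hard part is not any calculation but securing the input: one must have the homological Mayer--Vietoris sequence for pro-$p$ amalgams and HNN-extensions with $\Z_p$-coefficients, together with the exact description of its degree-$0$ maps and of the boundary homomorphism into $H_0$. Once that is in place, everything is linear algebra over $\Q_p$ plus the single structural fact that a procyclic group contributes at most one to the $p$-rational rank.
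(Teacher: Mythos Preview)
Your proposal is correct and follows essentially the same approach as the paper: both arguments write out the low-degree portion of the Mayer--Vietoris sequence with $\Q_p$-coefficients, observe that the connecting map into degree~$0$ vanishes (via the antidiagonal in case~(a) and $\theta_0=0$ in case~(b)), and then read off the rank formula by counting $\Q_p$-dimensions with $\rho(G)=\dim_{\Q_p}(\image\alpha)$ (respectively $1-\dim_{\Q_p}(\image\alpha)$). Your treatment is in fact more explicit than the paper's, which simply records the resulting exact sequences and asserts $\gamma=0$, $\delta=0$ without spelling out why; you also correctly flag the degenerate case $C=1$ in part~(b), where $\alpha$ is vacuously both injective and zero.
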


\begin{proof}
	(a) Let $G=G_1\amalg_{C}G_2$. 
	The Mayer-Vietoris sequence associated to 
	$-\otimes_{\Z_p}\Q_p$ specializes to an exact sequence
	\begin{equation}
	\label{eq:baslem444}
	\xymatrix{
		&C^{ab}\otimes_{\Z_p} \Q_p \ar[r]^-{\alpha}& 
		(G_1^{ab}\otimes_{\Z_p} \Q_p)\oplus (G_2^{ab}\otimes_{\Z_p} \Q_p)\ar[r]^-{\beta}&
		G^{ab}\otimes_{\Z_p} \Q_p\ar[d]^\gamma\\
		0& \Q_p\ar[l]&\Q_p\oplus \Q_p\ar[l]&\Q_p\ar[l]
	}
	\end{equation} 
	In particular, $\gamma=0$, and this yields (a).
	
	\noindent
	(b) In this case the Mayer-Vietoris sequence specializes to
	\begin{equation}
	\label{eq:baslem555}
	\xymatrix{
		&C^{ab}\otimes_{\Z_p} \Q_p\ar[r]^-{\alpha}& 
		G_1^{ab}\otimes_{\Z_p} \Q_p\ar[r]^-{\beta}&
		G^{ab}\otimes_{\Z_p} \Q_p\ar[d]\\
		0& \Q_p\ar[l]&\Q_p\ar[l]&\Q_p\ar[l]_-{\delta}
	}
	\end{equation} 
	In particular, $\delta=0$ which yields \eqref{eq:baslem3}, and thus also \eqref{eq:baslem2}. The final
	remarks (1) and (2) follow from the fact that
	$\dim(\image(\alpha))\in\{0,1\}$, and that $\dim(\image(\alpha))=1$ if, and only if, $\alpha$ is injective.
\end{proof}

From Lemma \ref{l2} one concludes the following Proposition. 

\begin{prop}\label{l12}
	Let  G be a   non-procyclic pro-p limit group, then   $$\dim_{\Q_p}(\Q_p\otimes_{\Z_p}G^{\ab})\geq  2.$$	 
\end{prop}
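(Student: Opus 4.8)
The plan is to follow the template of the proof of Theorem~\ref{thmD}: argue by induction on the height $n=\het(G)$, invoke the pro-$p$ analogue of Proposition~\ref{prop:prin} to write $G\cong\pi_1(\Upsilon,\Lambda,\caT)$ with $\Lambda$ finite connected, all edge groups procyclic or trivial and all vertex groups abelian or pro-$p$ limit groups of height $\le n-1$, and then run a second induction on $s(\Lambda)=|V(\Lambda)|+|E(\Lambda)|$ which, exactly as in Cases~I and~II of that proof, reduces matters to $G=G_1\amalg_C G_2$ or $G=\HNN_\phi(G_1,C,t)$ with $G_1,G_2$ abelian or pro-$p$ limit groups of height $\le n-1$ and $C$ procyclic or trivial. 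Throughout I would use three standing facts: a non-trivial pro-$p$ limit group $H$ satisfies $\rk_{\Q_p}(H)\ge 1$ (its abelianization is infinite by \cite[\S 4, Cor.~4.5]{AD} and finitely generated as a pro-$p$ group, hence of positive free rank); pro-$p$ limit groups are torsion free (\cite[\S 3, Thm.~3.5(ii)]{AD}); and the Euler characteristic of a pro-$p$ limit group vanishes precisely when the group is abelian (\cite[\S 3, Thm.~3.6]{ABI}).

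For the base case $n=0$, $G$ is a finitely generated free pro-$p$ group, so being non-procyclic it has $d(G)\ge 2$, whence $G^{\ab}\cong\Z_p^{d(G)}$ and $\rk_{\Q_p}(G)=d(G)\ge 2$. In the inductive step I would first dispose of the abelian case: a non-procyclic abelian pro-$p$ limit group is torsion free and finitely generated, hence isomorphic to $\Z_p^m$ with $m\ge 2$, so $\rk_{\Q_p}(G)=m\ge 2$. So assume $G$ is non-abelian. In Case~I with $C=1$, Lemma~\ref{l2}(a) gives $\rk_{\Q_p}(G)=\rk_{\Q_p}(G_1)+\rk_{\Q_p}(G_2)$, and since $G$ is not procyclic this sum is $\ge 2$ (if it were $\le 1$, $G$ would be trivial or $\cong\Z_p$). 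In Case~I with $C\neq 1$, the Euler characteristic argument used in the proof of Theorem~\ref{thmD} — if both $G_i$ were abelian then $\chi(G)=\chi(G_1)+\chi(G_2)-\chi(C)=0$, forcing $G$ abelian — shows that some $G_i$, say $G_1$, is non-abelian, hence non-procyclic, so $\rk_{\Q_p}(G_1)\ge 2$ by induction; moreover $G_2\supseteq C\neq 1$ gives $\rk_{\Q_p}(G_2)\ge 1$, and since the correction term $\rho(G)$ of Lemma~\ref{l2}(a) lies in $\{0,1\}$ we obtain $\rk_{\Q_p}(G)\ge 2+1-1=2$.

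For Case~II with $C=1$ we have $G\cong G_1\amalg\langle t\rangle$ with $\langle t\rangle\cong\Z_p$, which is an instance of Case~I (so $G_1\ne 1$, else $G\cong\Z_p$). For Case~II with $C\neq 1$, the same Euler characteristic argument — $\chi(G)=\chi(G_1)-\chi(C)=\chi(G_1)$ would vanish if $G_1$ were abelian, forcing $G$ abelian — shows $G_1$ is non-abelian, hence non-procyclic, so $\rk_{\Q_p}(G_1)\ge 2$ by induction, and Lemma~\ref{l2}(b) gives $\rk_{\Q_p}(G)=\rk_{\Q_p}(G_1)+\rho(G)\ge 2$ since $\rho(G)\in\{0,1\}$. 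This exhausts all cases.

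The point requiring the most care is the reduction machinery imported from the proof of Theorem~\ref{thmD}: one must check that the pieces $G_1,G_2$ obtained by peeling an edge off $\Lambda$ are genuinely finitely generated pro-$p$ limit groups (abelian, or of strictly smaller height at the vertices), so that both inductions and Lemma~\ref{l2} apply, and that the degenerate configurations — a factor equal to $C$, or a factor which is trivial or procyclic — cannot yield a non-procyclic $G$; in each such configuration $G$ collapses onto one of its vertex groups and the statement follows from the inductive hypothesis directly. Beyond that, the argument is just bookkeeping with the two exact sequences of Lemma~\ref{l2}.
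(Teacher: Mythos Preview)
Your proof is correct and follows essentially the same route as the paper's: double induction on $\het(G)$ and on $s(\Lambda)$, reducing to an amalgamated product or an HNN-extension and then applying Lemma~\ref{l2}. The only variation is that, in the cases with $C\neq 1$, you import the Euler-characteristic criterion from Theorem~\ref{thmD} to force one factor to be \emph{non-abelian} (hence non-procyclic, so the outer induction applies), whereas the paper argues directly that one factor must be \emph{non-procyclic} (otherwise $G$ itself would be procyclic) and then splits that factor into the abelian and non-abelian subcases; both arguments reach $\rk_{\Q_p}\geq 2$ for the relevant factor and the bookkeeping with $\rho(G)\in\{0,1\}$ is identical.
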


\begin{proof}
	
	We proceed by induction on  the height  $n=\het(G)$ of $G$.
	If $n=0$, then $G$ is a finitely generated abelian free pro-p  ou  free pro-p   group. By hypothesis we have  	\begin{equation}
	\label{eq:mt11}
	\rk_{\Q_p}(G)\geq 2
	\end{equation} 
	and hence the claim.
	So assume that $G$ is a non-procyclic pro-p limit group   of height $\het(G)=n\geq 1$, and that the claim holds
	for all non-procyclic pro-p limit groups  of height less or equal to $n-1$. By theorem 3.2 in ~\cite{ABI},  
	$G$ is isomorphic to the fundamental pro-p group $\pi_1(\Upsilon,\Lambda,\caT)$
	of a graph of pro-p groups $\Upsilon$ based on a finite connected graph $\Lambda$ which edge groups are either procyclic isomorphic to $\Z_p$  or trivial, and which vertex groups are either pro-p limit group of height at most $n-1$ or free abelian pro-groups. 
	Applying induction on $s(\Lambda)=|V(\Lambda)|+|E(\Lambda)|$ it suffices to consider
	the following two cases:
	
	\begin{itemize}
		\item[(I)] $G=G_1 \amalg_{C} G_2$ and $G_i$ is either a pro-p limit group  of height
		at most $n-1$ or abelian, and $C$ is either infinite procyclic isomorphic to $\Z_p$ or trivial, $i\in\{1,2\}$;
		\item[(II)] $G=\HNN_\phi(G_1,C,t)$ where $G_1$ is either a pro-p limit group of height
		at most $n-1$ or abelian, and $C$ is either infinite procyclic isomorphic to $\Z_p$ or trivial.
	\end{itemize}
	
	\noindent
	{\bf Case I:} Let  $G=G_1\amalg_CG_2$. We distinguish two cases.
	
	\noindent
	{\bf (1).}  If   $C$ is non-trivial, then either $G_1$ or $G_2$ is non-procyclic. Otherwise,  one would conclude that  $G$ must be procyclic  which was excluded by hypothesis. Hence without loss of generality we may assume that $G_1$ is non-procyclic. If $G_1$ is a  abelian pro-p group, then $\rk_{\Q_p}(G_1)\geq 2$.  If $G_1$ is a  non-abelian pro-p group, by induction, $\rk_{\Q_p}(G_1)\geq 2$. Furthermore, $\rk_{\Q_p}(G_2)\geq 1$. Hence,
	by applying Lemma \ref{eq:baslem1}(a), one concludes that 
	\begin{equation}
	\rk_{\Q_p}(G)=\rk_{\Q_p}(G_1)+
	\rk_{\Q_p}(G_2)-\rho(G)\geq 2+1-1=2.
	\end{equation}
	
	\noindent
	{\bf (2).} If $C=1$, by applying Lemma \ref{eq:baslem1}(a), one concludes that  
	\begin{align}
		\rk_{\Q_p}(G)=\rk_{\Q_p}(G_1)+\rk_{\Q_p}(G_2)\geq 1+1=2.	
	\end{align}
	{\bf Case II:} Let  $G=\HNN_\phi(G_1,C,t)=
	\langle\, G_1,t\mid\,t\,c\,t^{-1}=\phi(c)\,\rangle$ be an HNN-extension
	with   $C=\langle c \rangle$. If $C=1$, then $G=G_1\amalg <t>$ is isomorphic to a free pro-p  product.
	Hence the claim follows already from Case I. So we may assume that $C\not=1$. Note that $G_1$ must be non-procyclic.
	Otherwise, one has that  $G$ must be procyclic, a contradiction. 
	
	If $G_1$ is a  abelian pro-p group, then $\rk_{\Q_p}(G_1)\geq 2$.  If $G_1$ is a  non-abelian pro-p~group, by induction, $\rk_{\Q_p}(G_1)\geq 2$. Hence,
	by applying Lemma \ref{eq:baslem1}(b), one concludes that 
	\begin{equation}
	\rk_{\Q_p}(G)=\rk_{\Q_p}(G_1)+\rho(G)\geq 2+0=2.
	\end{equation}

\end{proof}

\begin{cor}
	Let $G$ a pro-p limit  group  and  $\rk_{\Q_p}(G)=1$, then   $G\simeq \Z_p$. 
\end{cor}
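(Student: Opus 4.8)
The plan is to deduce this statement immediately from Proposition~\ref{l12} via its contrapositive. First I would observe that the hypothesis $\rk_{\Q_p}(G)=1$ implies in particular $\rk_{\Q_p}(G)<2$, so $G$ cannot be a non-procyclic pro-$p$ limit group by Proposition~\ref{l12}. Hence $G$ is procyclic, i.e.\ topologically generated by a single element.

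Next I would invoke the structure of procyclic pro-$p$ groups: such a group is a continuous quotient of $\Z_p$, and therefore is isomorphic either to $\Z_p$ or to a finite cyclic $p$-group $\Z/p^k\Z$ with $k\geq 1$. Since every pro-$p$ limit group is torsion free (see \cite[\S 3, Thm.~3.5(ii)]{AD}), the finite case is excluded, leaving only $G\simeq\Z_p$ or $G=1$.

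Finally I would rule out the trivial group: if $G=1$ then $\rk_{\Q_p}(G)=\dim_{\Q_p}(\Q_p\otimes_{\Z_p}0)=0\neq 1$, contradicting the hypothesis. Hence $G\simeq\Z_p$, and one may note that conversely $\rk_{\Q_p}(\Z_p)=\dim_{\Q_p}\Q_p=1$, so the conclusion is sharp. I do not expect any real obstacle here: the entire content is carried by Proposition~\ref{l12}, and the only additional ingredients are the classification of procyclic pro-$p$ groups and the torsion-freeness of pro-$p$ limit groups, both standard.
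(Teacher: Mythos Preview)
Your proposal is correct and follows exactly the approach the paper intends: the corollary is stated immediately after Proposition~\ref{l12} with no explicit proof, so it is meant to be read as the contrapositive of that proposition together with the elementary observations you supply (classification of procyclic pro-$p$ groups, torsion-freeness of pro-$p$ limit groups, and $\rk_{\Q_p}(1)=0$). There is nothing to add.
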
 
	
	\bibliography{limitbib}
	\bibliographystyle{amsplain}
\end{document}